\documentclass[12pt]{article}

\usepackage{amsfonts, amssymb, amsmath, amsthm, epsf}

\sloppy

\voffset=-26mm \hoffset=-20mm \textheight=245mm \textwidth=180mm

\numberwithin{figure}{section}

\theoremstyle{plain}
\newtheorem{theorem}{Theorem}[section]
\newtheorem{lemma}{Lemma}[section]
\newtheorem{maximum}{Maximum Principle}[section]
\newtheorem{corollary}{Corollary}[section]
\newtheorem{condition}{Condition}[section]

\theoremstyle{definition}

\newtheorem{remark}{Remark}[section]

\numberwithin{equation}{section}

\newcommand{\loc}{{\rm loc}}
\newcommand{\ind}{{\rm ind\,}}
\newcommand{\supp}{{\rm supp\,}}
\newcommand{\spt}{{\rm spt\,}}
\newcommand{\Dom}{{\rm D}}

\renewcommand{\ker}{{\rm ker\,}}
\renewcommand{\dim}{{\rm dim\,}}

\newcommand{\dist}{{\rm dist}}
\renewcommand{\phi}{{\varphi}}

\newcommand{\cK}{{\mathcal K}}
\newcommand{\cM}{{\mathcal M}}
\newcommand{\cN}{{\mathcal N}}

\newcommand{\cO}{{\mathcal O}}
\newcommand{\cC}{{\mathcal C}}

\newcommand{\pG}{{\partial G}}

\newcommand{\oG}{{\overline G}}

\newcommand{\bP}{{\mathbf P}}
\newcommand{\bB}{{\mathbf B}}

\newcommand{\bS}{{\mathbf S}}
\newcommand{\bI}{{\mathbf I}}
\newcommand{\bT}{{\mathbf T}}

\newcommand{\bbR}{{\mathbb R}}

\title{Bounded perturbations of two-dimensional diffusion processes with
nonlocal conditions near the boundary}
\author{Pavel~Gurevich\thanks{Supported by the Russian Foundation for Basic Research (project
No.~04-01-00256) and the Alexander von Humboldt Foundation.}}

\date{}

\begin{document}

\maketitle

\begin{abstract}
We study the existence of Feller semigroups arising in the theory
of multidimensional diffusion processes. We study bounded
perturbations of elliptic operators with boundary conditions
containing an integral over the closure of the domain with respect
to a nonnegative Borel measure without assuming that the measure
is small. We state sufficient conditions on the measure
guaranteeing that the corresponding nonlocal operator is the
generator of a Feller semigroup.
\end{abstract}

\section{Introduction and Preliminaries}

In~\cite{Feller1,Feller2}, Feller investigated a general form of a generator of a
strongly continuous contractive nonnegative semigroup of operators acting between the
spaces of continuous functions on an interval, a half-line, or the whole line. Such a
semigroup corresponds to the one-dimensional diffusion process and is now called the
Feller semigroup. In the multidimensional case, the general form of a generator of a
Feller semigroup has been obtained by Ventsel~\cite{Ventsel}. Under some regularity
assumptions concerning the Markov process, he proved that the generator of the
corresponding Feller semigroup is an elliptic differential operator of second order
(possibly with degeneration) whose domain of definition consists of continuous (once or
twice continuously differentiable, depending on the process) functions satisfying
nonlocal conditions which involve an  integral of a function  over the closure of the
region with respect to a nonnegative Borel measure $\mu(y,d\eta)$. The inverse question
remains open: given an elliptic integro-differential operator whose domain of definition
is described by nonlocal boundary conditions, whether or not this operator (or its
closure) is a generator of a Feller semigroup.

One distinguishes two classes of nonlocal boundary conditions: the so-called {\it
transversal} and {\it nontransversal} ones. The order of nonlocal terms is less than the
order of local terms in the transversal case, and these orders coincide in the
nontransversal case (see, e.g.,~\cite{Taira3} for details and probabilistic
interpretation). The transversal case was studied in~\cite{SU, BCP, Taira1, Taira3,
Ishikawa, GalSkJDE}. The more difficult nontransversal nonlocal conditions are dealt with
in~\cite{SkDAN89,SkRJMP95,GalSkMs, GalSkJDE}.

It was assumed in~\cite{SkDAN89,SkRJMP95} that the coefficients at nonlocal terms
decrease as the argument tends to the boundary. In~\cite{GalSkMs,GalSkJDE}, the authors
considered nonlocal conditions with the coefficients that are less than one. This allowed
them to regard (after reduction to the boundary) the nonlocal problem as a perturbation
of the ``local'' Dirichlet problem.

In this paper, we  consider nontransversal nonlocal conditions on the boundary of a plane
domain $G$, admitting  ``limit case'' where the measure $\mu(y,\oG)$, after some
normalization, may equal one (it cannot be greater than one~\cite{Ventsel}). We assume
that if the support of the measure $\mu(y,d\eta)$ is ``close'' to the point $y$ for some
$y\in\pG$ and  $\mu(y,\oG)=1$, then the measure  $\mu(y,d\eta)$ is atomic.

Based on the Hille--Iosida theorem and on the solvability of elliptic equations with
nonlocal terms supported near the boundary~\cite{GurMIAN2007}, we provide a class of
Borel measures $\mu(y,d\eta)$ for which the corresponding nonlocal operator  is a
generator of a Feller semigroup.

In the conclusion of this section, we remind the notion of a
Feller semigroup and its generator and formulate a version of the
Hille--Iosida theorem adapted for our purposes.

\bigskip

Let $G\subset\bbR^2$ be a bounded domain with piecewise smooth
boundary $\pG$, and let $X$ be a closed subspace in $C(\oG)$
containing at least one nontrivial nonnegative function.

A strongly continuous semigroup of operators $\bT_t:X\to X$ is called a {\it Feller
semigroup}   {\it on} $X$ if it satisfies the following conditions: 1. $\|\bT_t\|\le 1$,
$t\ge0$; 2.  $\bT_t u\ge0$ for all $t\ge0$ and $u\in X$, $u\ge0$.

A linear operator $\bP:\Dom(\bP)\subset X\to X$ is called the
({\it infinitesimal}) {\it generator} of a strongly continuous
semigroup $\{\bT_t\}$ if $ \bP u=\lim\limits_{t\to +0}{(\bT
u-u)}/{t},\  \Dom(\bP)=\{u\in X:  \text{the limit  exists in }
X\}. $

\begin{theorem}[the Hille--Iosida theorem, see Theorem~9.3.1 in~\cite{Taira1}]\label{thHI}
\begin{enumerate}
\item
Let  $\bP:\Dom(\bP)\subset X\to X$  be a generator of a Feller
semigroup on $X$. Then the following assertions are true.
\begin{enumerate}
\item[$(a)$]
The domain $\Dom(\bP)$ is dense in $X$.
\item[$(b)$]
For each $q>0$ the operator $q\bI-\bP$ has the bounded inverse
$(q\bI-\bP)^{-1}:X\to X$ and $\|(q\bI-\bP)^{-1}\|\le 1/q$.
\item[$(c)$]
The operator $(q\bI-\bP)^{-1}:X\to X$, $q>0$, is nonnegative.
\end{enumerate}
\item
Conversely, if $\bP$ is a linear operator from $X$ to $X$
satisfying condition $(a)$ and there is a constant $q_0\ge 0$ such
that conditions $(b)$ and $(c)$ hold for $q>q_0$, then $\bP$ is
the generator of a certain Feller semigroup  on $X$, which is
uniquely determined by $\bP$.
\end{enumerate}
\end{theorem}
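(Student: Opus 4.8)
The plan is to treat the two directions by the classical Hille--Yosida--Feller argument, keeping careful track of positivity. For the direct part~(1), given a Feller semigroup $\{\bT_t\}$ with generator $\bP$, I would first prove~(a) by mollification: for $u\in X$ set $u_t=\frac1t\int_0^t\bT_su\,ds$; using the semigroup law one checks $u_t\in\Dom(\bP)$ with $\bP u_t=\frac1t(\bT_tu-u)$, while $u_t\to u$ as $t\to+0$ by strong continuity, so $\Dom(\bP)$ is dense. For~(b) and~(c) I would realize the resolvent as a Laplace transform, $R_qu=\int_0^\infty e^{-qt}\bT_tu\,dt$ for $q>0$; the bound $\|\bT_t\|\le1$ gives convergence of the (Bochner) integral together with $\|R_qu\|\le\|u\|/q$, and a direct computation with the semigroup law shows $R_q$ maps $X$ into $\Dom(\bP)$ with $(q\bI-\bP)R_q=\bI$ on $X$ and $R_q(q\bI-\bP)=\bI$ on $\Dom(\bP)$, i.e. $R_q=(q\bI-\bP)^{-1}$. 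Since $\bT_tu\ge0$ whenever $u\ge0$ and the nonnegative cone of $X$ (inherited from $C(\oG)$) is closed, the integral $R_qu$ is again nonnegative, which is~(c).

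For the converse part~(2), assume~(a) and that~(b) and~(c) hold for $q>q_0$. I would introduce the Yosida approximation $\bP_n:=n\bP(n\bI-\bP)^{-1}=n^2(n\bI-\bP)^{-1}-n\bI$ for $n>q_0$, a bounded operator, and the norm-continuous semigroups $\bT_t^{(n)}:=e^{t\bP_n}=e^{-nt}\sum_{k=0}^\infty\frac{(n^2t)^k}{k!}(n\bI-\bP)^{-k}$. From $\|(n\bI-\bP)^{-k}\|\le n^{-k}$ one gets $\|\bT_t^{(n)}\|\le1$, and since each $(n\bI-\bP)^{-k}$ is nonnegative by~(c) while all coefficients are positive, $\bT_t^{(n)}\ge0$. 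Next I would establish $n(n\bI-\bP)^{-1}u\to u$ for every $u\in X$: for $u\in\Dom(\bP)$ this follows from $n(n\bI-\bP)^{-1}u-u=(n\bI-\bP)^{-1}\bP u$ and $\|(n\bI-\bP)^{-1}\|\le1/n$, and the general case follows from density~(a) together with the uniform bound $\|n(n\bI-\bP)^{-1}\|\le1$; consequently $\bP_nu=n(n\bI-\bP)^{-1}\bP u\to\bP u$ for $u\in\Dom(\bP)$.

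The heart of the argument is the convergence of the approximants. Since all operators in play are functions of the commuting resolvents, $\bT_t^{(n)}$ and $\bT_t^{(m)}$ commute, so $\bT_t^{(n)}u-\bT_t^{(m)}u=\int_0^1\frac{d}{ds}\bigl(\bT_{st}^{(n)}\bT_{(1-s)t}^{(m)}u\bigr)\,ds=t\int_0^1\bT_{st}^{(n)}\bT_{(1-s)t}^{(m)}(\bP_n-\bP_m)u\,ds$, whence $\|\bT_t^{(n)}u-\bT_t^{(m)}u\|\le t\,\|(\bP_n-\bP_m)u\|$; for $u\in\Dom(\bP)$ the right side tends to $0$ uniformly for $t$ in compact intervals, and density plus $\|\bT_t^{(n)}\|\le1$ extends this to all $u\in X$. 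Hence $\bT_tu:=\lim_n\bT_t^{(n)}u$ exists uniformly on compact $t$-intervals, and passing to the limit transfers the semigroup law, $\|\bT_t\|\le1$, nonnegativity, and (by the uniform convergence) strong continuity, so $\{\bT_t\}$ is a Feller semigroup. To identify its generator $\bP'$ with $\bP$: for $u\in\Dom(\bP)$, letting $n\to\infty$ in $\bT_t^{(n)}u-u=\int_0^t\bT_s^{(n)}\bP_nu\,ds$ gives $\bT_tu-u=\int_0^t\bT_s\bP u\,ds$, so $u\in\Dom(\bP')$ and $\bP'u=\bP u$, i.e. $\bP\subset\bP'$; since for $q>q_0$ both $q\bI-\bP$ (by~(b)) and $q\bI-\bP'$ (by part~(1), already proved, applied to $\{\bT_t\}$) are bijections of their domains onto $X$, the inclusion forces $\Dom(\bP)=\Dom(\bP')$ and $\bP=\bP'$. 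Uniqueness of the semigroup follows because $\bP$ determines $R_q=(q\bI-\bP)^{-1}$ for all large $q$, and a strongly continuous contraction semigroup is determined by its resolvent via uniqueness of the Laplace transform. I expect the main obstacle to be the Cauchy estimate for $\{\bT_t^{(n)}u\}$ and, more subtly, excluding that the limit semigroup has a strictly larger generator than $\bP$ --- precisely where the surjectivity in~(b), not merely the norm bound, is essential.
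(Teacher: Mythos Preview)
The paper does not give its own proof of this statement: Theorem~\ref{thHI} is stated with the attribution ``see Theorem~9.3.1 in~\cite{Taira1}'' and is used as a black box, so there is no in-paper argument to compare against. Your proposal is the standard Hille--Yosida--Feller proof (Laplace-transform resolvent for the direct part, Yosida approximation $\bP_n=n^2(n\bI-\bP)^{-1}-n\bI$ for the converse), and the positivity bookkeeping --- reading off $\bT_t^{(n)}\ge0$ from the exponential series and~(c), and passing to the limit through the closed nonnegative cone --- is handled correctly. This is exactly the argument one finds in the cited source, so your write-up is consistent with what the paper invokes; there is simply nothing further in the paper itself to compare it to.
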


\section{Nonlocal Conditions near the Conjugation Points}\label{subsectStatement}
Consider a set ${\cK}\subset\partial G$ consisting of finitely
many points. Let $\partial G\setminus{\mathcal
K}=\bigcup\limits_{i=1}^{N}\Gamma_i$, where $\Gamma_i$ are open
(in the topology of $\partial G$) $C^\infty$ curves. Assume that
the domain $G$ is a plane angle in some neighborhood of each point
$g\in{\mathcal K}$.

For an integer $k\ge0$, denote by $W_2^k(G)$ the usual Sobolev
space.  Denote by $W^k_{2,\loc}(G)$ ($k\ge0$ is an integer) the
set of functions $u$ such that $u\in W_2^k(G')$ for any domain
$G'$, $\overline{G'}\subset G$.

Consider the differential operator $$
P_0u=\sum\limits_{j,k=1}^{2}p_{jk}(y)u_{y_jy_k}(y)+
\sum\limits_{j=1}^2p_j(y)u_{y_j}(y)+p_0(y)u(y), $$ where
$p_{jk},p_j\in C^\infty(\bbR^2)$ are real-valued functions and
$p_{jk}=p_{kj}$, $j,k=1,2$.

\begin{condition}\label{cond1.1}
1. There is a constant $c>0$ such that
$\sum\limits_{j,k=1}^{2}p_{jk}(y)\xi_j\xi_k\ge c|\xi|^2$ for
$y\in\overline{G}$ and $\xi=(\xi_1,\xi_2)\in\bbR^2.$ 2.
 $p_0(y)\le0$ for
$y\in\overline{G}$.
\end{condition}

In the sequel, we will use the following version of the well-known
maximum principle.

\begin{maximum}[see Theorem 9.6 in~\cite{GilbTrud}]\label{mp2}
Let $D\subset\bbR^2$ be a bounded or unbounded domain, and let
Condition~$\ref{cond1.1}$ hold with $G$ replaced by $D$. If a
function $u\in C(D)$ achieves its positive maximum at a point
$y^0\in D$ and\footnote{Here and below the operator $P_0$ acts in
the sense of distributions.} $P_0u\in C(D)$, then $P_0
u(y^0)\le0$.
\end{maximum}

Introduce the operators corresponding to nonlocal terms supported
near the set $\mathcal K$. For any  set $\mathcal M$, we denote
its $\varepsilon$-neighborhood by $\mathcal
O_{\varepsilon}(\mathcal M)$. Let $\Omega_{is}$ ($i=1, \dots, N;$
$s=1, \dots, S_i$) be $C^\infty$ diffeomorphisms taking some
neighborhood ${\mathcal O}_i$ of the curve
$\overline{\Gamma_i\cap\mathcal O_{{\varepsilon}}(\mathcal K)}$ to
the set $\Omega_{is}({\mathcal O}_i)$ in such a way that
$\Omega_{is}(\Gamma_i\cap\mathcal O_{{\varepsilon}}(\mathcal
K))\subset G$ and $ \Omega_{is}(g)\in\mathcal K$ for $
g\in\overline{\Gamma_i}\cap\mathcal K. $ Thus, the transformations
$\Omega_{is}$ take the curves $\Gamma_i\cap\mathcal
O_{{\varepsilon}}(\mathcal K)$ strictly inside the domain $G$ and
the set of their end points $\overline{\Gamma_i}\cap\mathcal K$ to
itself.

Let us specify the structure of the transformations $\Omega_{is}$
near the set $\mathcal K$. Denote by $\Omega_{is}^{+1}$ the
transformation $\Omega_{is}:{\mathcal O}_i\to\Omega_{is}({\mathcal
O}_i)$ and by $\Omega_{is}^{-1}:\Omega_{is}({\mathcal
O}_i)\to{\mathcal O}_i$ the inverse transformation. The set of
points
$\Omega_{i_qs_q}^{\pm1}(\dots\Omega_{i_1s_1}^{\pm1}(g))\in{\mathcal
K}$ ($1\le s_j\le S_{i_j},\ j=1, \dots, q$) is said to be an {\em
orbit} of the point $g\in{\mathcal K}$. In other words, the orbit
of a point $g$ is formed by the points (of the set $\mathcal K$)
that can be obtained by consecutively applying the transformations
$\Omega_{i_js_j}^{\pm1}$ to the point $g$. The set $\mathcal K$
consists of finitely many disjoint orbits, which we denote by
$\mathcal K_\nu$.

Take a sufficiently small number $\varepsilon>0$ such that there
exist neighborhoods $\mathcal O_{\varepsilon_1}(g_j)$, $ \mathcal
O_{\varepsilon_1}(g_j)\supset\mathcal O_{\varepsilon}(g_j) $,
satisfying the following conditions: 1. the domain $G$ is a plane
angle in the neighborhood $\mathcal O_{\varepsilon_1}(g_j)$;  2.
$\overline{\mathcal O_{\varepsilon_1}(g)}\cap\overline{\mathcal
O_{\varepsilon_1}(h)}=\varnothing$ for any $g,h\in\mathcal K$,
$g\ne h$; 3. if $g_j\in\overline{\Gamma_i}$ and
$\Omega_{is}(g_j)=g_k,$ then ${\mathcal
O}_{\varepsilon}(g_j)\subset\mathcal
 O_i$ and
 $\Omega_{is}\big({\mathcal
O}_{\varepsilon}(g_j)\big)\subset{\mathcal
O}_{\varepsilon_1}(g_k).$

For each point $g_j\in\overline{\Gamma_i}\cap\mathcal K_\nu$, we
fix a linear transformation $Y_j: y\mapsto y'(g_j)$ (the
composition of the shift by the vector $-\overrightarrow{Og_j}$
and rotation) mapping the point $g_j$ to the origin in such a way
that $ Y_j({\mathcal O}_{\varepsilon_1}(g_j))={\mathcal
O}_{\varepsilon_1}(0),\ Y_j(G\cap{\mathcal
O}_{\varepsilon_1}(g_j))=K_j\cap{\mathcal O}_{\varepsilon_1}(0), $
$ Y_j(\Gamma_i\cap{\mathcal
O}_{\varepsilon_1}(g_j))=\gamma_{j\sigma}\cap{\mathcal
O}_{\varepsilon_1}(0)\ (\sigma=1\ \text{or}\ 2), $ where $
 K_j$ is a plane angle of nonzero opening and $\gamma_{j\sigma}$ its sides.

\begin{condition}\label{condK1}
Let $g_j\in\overline{\Gamma_i}\cap\mathcal K_\nu$ and
$\Omega_{is}(g_j)=g_k\in\mathcal K_\nu;$ then the transformation $
Y_k\circ\Omega_{is}\circ Y_j^{-1}:{\mathcal
O}_{\varepsilon}(0)\to{\mathcal O}_{\varepsilon_1}(0) $ is the
composition of rotation and homothety centered at the origin.
\end{condition}

Introduce the nonlocal operators $\mathbf B_{i}$ by the formulas
\begin{equation}\label{eq3'}
 \mathbf B_{i}u=\sum\limits_{s=1}^{S_i}
b_{is}(y) u(\Omega_{is}(y)),\quad
   y\in\Gamma_i\cap\mathcal O_{\varepsilon}(\mathcal K),\qquad
\mathbf B_{i}u=0,\quad y\in\Gamma_i\setminus\mathcal
O_{\varepsilon}(\mathcal K),
\end{equation}
where $b_{is}\in C^\infty(\mathbb R^2)$ are real-valued functions,
$\supp b_{is}\subset\mathcal O_{{\varepsilon}}(\mathcal K)$.

\begin{condition}\label{cond1.2}
\begin{enumerate}
\item
$
b_{is}(y)\ge0,\qquad \sum\limits_{s=1}^{S_i} b_{is}(y)\le 1,\qquad
 y\in\overline{\Gamma_i};
$
\item
$
 \sum\limits_{s=1}^{S_i}
b_{is}(g)+\sum\limits_{s=1}^{S_j} b_{js}(g)<2,\quad
g\in\overline{\Gamma_i}\cap\overline{\Gamma_j}\subset\cK,\qquad\text{if}\
 i\ne j\ \text{and}\
 \overline{\Gamma_i}\cap\overline{\Gamma_j}\ne\varnothing.
$
\end{enumerate}
\end{condition}

Now we formulate some auxiliary results to be used in the
next sections.

For any closed sets $Q\subset\oG$ and $K\subset\oG$ such that
$Q\cap K\ne\varnothing$, we introduce the space
\begin{equation}\label{eqC_K}
C_K(Q)=\{u\in C(Q): u(y)=0,\ y\in Q\cap K\}
\end{equation}
with the maximum-norm. Consider the space of vector-valued
functions $ \cC_\cK(\pG)=\prod\limits_{i=1}^N
C_\cK(\overline{\Gamma_i}) $ with the norm $
\|\psi\|_{\cC_\cK(\pG)}=
\max\limits_{i=1,\dots,N}\max\limits_{y\in\overline{\Gamma_i}}\|\psi_i\|_{C(\overline{\Gamma_i})},
$ where $\psi=\{\psi_i\}$, $\psi_i\in C_\cK(\overline{\Gamma_i})$.

Consider the problem
\begin{equation}\label{eq47-48}
P_0u-q u =f_0(y), \  y\in G;\qquad u|_{\Gamma_i}-\bB_i
u=\psi_i(y), \ y\in\Gamma_i,\ i=1,\dots,N.
\end{equation}

\begin{theorem}[see Theorem 4.1 in~\cite{GurMIAN2007}]\label{th1-2}
Let Conditions~$\ref{cond1.1}$--$\ref{cond1.2}$ be fulfilled. Then there is a number
$q_1>0$ such that, for any $f_0\in C(\oG)$, $\psi=\{\psi_i\}\in \cC_\cK(\pG)$, and $q\ge
q_1$, there exists a unique solution $u\in C_\cK(\overline G)\cap W_{2,\loc}^2(G)$ of
problem~\eqref{eq47-48}. Furthermore, if $f_0=0$, then $u\in C_\cK(\overline G)\cap
C^\infty(G)$ and the following estimate holds{\rm:}
\begin{equation}\label{eq49}
\|u\|_{C_\cK(\overline G)}\le c_1\|\psi\|_{\cC_\cK(\pG)},
\end{equation}
where $c_1>0$ does not depend on  $\psi$ and $q$.
\end{theorem}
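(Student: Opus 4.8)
The plan is to reduce problem~\eqref{eq47-48} to an equivalent functional equation on $\pG$ for the trace of the unknown, to invert the resulting operator, and to read the estimate~\eqref{eq49} off the maximum principle; the only genuinely new step is the treatment of the ``limit case'' $\sum_s b_{is}=1$ near $\cK$. First I would dispose of the right-hand side $f_0$: for $q\ge q_1$ ($q_1$ large) the Dirichlet problem $P_0v-qv=f_0$ in $G$, $v|_{\Gamma_i}=0$, has a unique solution $v\in W_{2,\loc}^2(G)$, which — $G$ being piecewise smooth, hence Wiener-regular — is continuous up to $\pG$ and vanishes there, so that $v\in C_\cK(\oG)$ (see, e.g.,~\cite{GilbTrud}). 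Writing $u=v+w$ turns~\eqref{eq47-48} into the same problem for $w$ with $f_0$ replaced by $0$ and $\psi_i$ by $\psi_i+\bB_iv$, and $\{\psi_i+\bB_iv\}\in\cC_\cK(\pG)$ because $v|_\cK=0$ and $\Omega_{is}(g)\in\cK$ for $g\in\overline{\Gamma_i}\cap\cK$. So it suffices to treat the case $f_0=0$, and then interior elliptic regularity gives $w\in C^\infty(G)$ for free.

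Next, let $\bD_q$ be the solution operator of the homogeneous Dirichlet problem $P_0v-qv=0$ in $G$, $v|_{\Gamma_i}=\phi_i$ (for $\phi\in\cC_\cK(\pG)$); it is well defined for $q\ge q_1$ and maps into $C_\cK(\oG)\cap C^\infty(G)$. Applying Maximum Principle~\ref{mp2} to the operator $P_0$ and to $\pm\bD_q\phi$ (for which $P_0(\bD_q\phi)=q\bD_q\phi\in C(G)$), we see that $\bD_q\phi$ attains neither a positive maximum nor a negative minimum in $G$, whence $\|\bD_q\phi\|_{C(\oG)}=\|\phi\|_{\cC_\cK(\pG)}$ and $\bD_q\phi\ge0$ for $\phi\ge0$; the same reasoning applied to any solution $u$ of~\eqref{eq47-48} with $f_0=0$ gives $\|u\|_{C(\oG)}=\max_i\max_{\overline{\Gamma_i}}|u|$ and $u=\bD_q(u|_\pG)$. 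Hence $u$ solves~\eqref{eq47-48} with $f_0=0$ and data $\psi$ if and only if $u=\bD_q\phi$ with $\phi\in\cC_\cK(\pG)$ a solution of
\[
\phi-\bB\bD_q\phi=\psi,\qquad (\bB\bD_q\phi)_i(y)=\sum_{s=1}^{S_i}b_{is}(y)\,(\bD_q\phi)\big(\Omega_{is}(y)\big).
\]
Since $b_{is}\ge0$, $\sum_sb_{is}\le1$ and $\|\bD_q\phi\|_{C(\oG)}=\|\phi\|$, we get $\|\bB\bD_q\|\le1$ uniformly in $q\ge q_1$; everything comes down to inverting $\bI-\bB\bD_q$ with $\|(\bI-\bB\bD_q)^{-1}\|\le c_1$ independent of $q$.

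If $\sup\sum_sb_{is}<1$, then $\|\bB\bD_q\|<1$ and a Neumann series finishes the proof at once; the difficulty — and the main obstacle — is that $\sum_sb_{is}(y)=1$ is allowed for $y$ near $\cK$, so that $\|\bB\bD_q\|=1$. Away from $\cO_\varepsilon(\cK)$ the equation is trivial ($\phi_i=\psi_i$), so the analysis is localized at the orbits $\cK_\nu$. Near $\cK_\nu$ I would use Condition~\ref{condK1}: in the coordinates $Y_j$ each $Y_k\circ\Omega_{is}\circ Y_j^{-1}$ is a rotation composed with a homothety about the vertex, carrying a side of the angle $K_j$ strictly into the interior of $K_k$; freezing the smooth coefficients at the vertices reduces the localized problem to a \emph{model} nonlocal problem for $P_0$ in the finite family of plane angles $\{K_j:g_j\in\cK_\nu\}$, coupled along the orbit by the $\Omega_{is}$. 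For this model problem the Dirichlet solution in each angle vanishes on its sides and hence decays at the vertex, so composing it with $\Omega_{is}$ — which throws a boundary ray of $K_j$ into $K_k$ at a definite angular distance from $\pG$ — strictly decreases its supremum; together with $\sum_sb_{is}\le1$ and with Condition~\ref{cond1.2}(2), which rules out the weight reaching $1$ simultaneously on both curves meeting at a vertex, this forces the model nonlocal operator to have spectral radius $<1$, so that $\bI$ minus it is boundedly invertible. Gluing the local inverses by a partition of unity subordinate to $\{\cO_{\varepsilon_1}(g_j)\}$ and to the complement of $\overline{\cO_\varepsilon(\cK)}$ yields invertibility of $\bI-\bB\bD_q$; to obtain a bound on the inverse uniform in $q\ge q_1$ I would argue by contradiction, taking $u^{(n)}$ with $\|u^{(n)}\|_{C(\oG)}=1$, $\|\psi^{(n)}\|\to0$, and passing to the limit with the interior Schauder estimates and (near $\cK$) with the rescaling that produces the model problem, so that the limit would be a nontrivial solution of a homogeneous model nonlocal problem — contradicting the spectral-radius bound. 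This corner analysis, built on Conditions~\ref{condK1} and~\ref{cond1.2}, is the technical heart of the argument; everything else is routine.

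Finally, with $\|(\bI-\bB\bD_q)^{-1}\|\le c_1$ for $q\ge q_1$, put $\phi=(\bI-\bB\bD_q)^{-1}\psi$ and $u=\bD_q\phi$ (plus $v$ from the first step when $f_0\ne0$); then $u\in C_\cK(\oG)\cap W_{2,\loc}^2(G)$, $u\in C^\infty(G)$ when $f_0=0$, and $\|u\|_{C_\cK(\oG)}=\|\phi\|_{\cC_\cK(\pG)}\le c_1\|\psi\|_{\cC_\cK(\pG)}$ when $f_0=0$, which is~\eqref{eq49} with $c_1$ independent of $\psi$ and $q$. Uniqueness follows by applying this estimate to the difference of two solutions ($f_0=0$, $\psi=0$), and the asserted interior regularity is the standard interior regularity for the elliptic operator $P_0-q$ with $C^\infty$ coefficients.
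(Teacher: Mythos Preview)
The paper does not prove this theorem; it is imported from~\cite{GurMIAN2007} (Theorem~4.1 there) and used here as a black box to define the operator $\bS_q$ and to underpin Lemmas~\ref{l4}--\ref{l5}. There is therefore no proof in the present paper to compare your attempt against.

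That said, your outline follows the natural route and correctly isolates the crux: invertibility of $\bI-\bB\bD_q$ on $\cC_\cK(\pG)$ with a bound independent of $q$, the obstacle being that $\|\bB\bD_q\|$ may equal~$1$ when $\sum_s b_{is}=1$ somewhere near $\cK$. The step you flag as ``the technical heart'' is, however, more than a matter of filling in details. Your heuristic for the model corner operator having spectral radius $<1$ contains a slip --- $\bD_q\phi$ does not ``vanish on its sides'' (its trace there is $\phi$, not $0$; only the value at the vertex vanishes) --- and the geometric observation that $\Omega_{is}$ carries a side strictly into the interior of $K_k$ does not suffice by itself: the image curve passes through the vertex and is therefore \emph{not} uniformly separated from $\partial K_k$, so one needs a quantitative rate of vanishing of $\bD_q\phi$ at the vertex, matched against the homothety ratio in $\Omega_{is}$. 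This is exactly what the weighted (Kondratiev-type) corner analysis in~\cite{GurMIAN2007} supplies, and is the reason Condition~\ref{condK1} insists on the rotation--homothety structure (so that the model problem admits a Mellin transform in the radial variable); Condition~\ref{cond1.2}(2) then enters as an invertibility condition on the resulting operator pencil. Your compactness/contradiction argument for the $q$-uniform bound would likewise need such weighted estimates as input, to prevent a normalized sequence from concentrating at~$\cK$.
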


Let  $u\in C^\infty(G)\cap C_\cK(\oG)$ be a solution of
problem~\eqref{eq47-48} with $f_0=0$ and
$\psi=\{\psi_i\}\in\cC_\cK(\pG)$. Denote $u=\bS_q\psi$. By
Theorem~\ref{th1-2}, the operator
$$
\bS_q: \cC_\cK(\pG)\to C_\cK(\overline G),\qquad q\ge q_1,
$$
is bounded and $\|\bS_q\|\le c_1$, where $c_1>0$ does not depend
on $q$.

\begin{lemma}\label{l4}
Let Conditions~$\ref{cond1.1}$--$\ref{cond1.2}$ hold, let $Q_1$
and $Q_2$ be closed sets such that $Q_1\subset\pG$,
$Q_2\subset\overline G$, and $Q_1\cap Q_2=\varnothing$, and let
$q\ge q_1$. Then the inequality
$$
\|\bS_q\psi\|_{C(Q_2)}\le\dfrac{c_2}{q}\|\psi\|_{\cC_\cK(\pG)},\qquad
q\ge q_1,
$$
holds for any $\psi\in\cC_\cK(\pG)$ such that
$\supp(\bS_q\psi)|_{\pG}\subset Q_1;$ here $c_2>0$ does not depend
on $\psi$ and $q$.
\end{lemma}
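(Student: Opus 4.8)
The plan is to exploit the decay estimate $\|\bS_q\psi\|_{C(\oG)}\le c_1\|\psi\|_{\cC_\cK(\pG)}$ from Theorem~\ref{th1-2} together with a localization argument: since $Q_1$ and $Q_2$ are disjoint closed sets and $\supp(\bS_q\psi)|_{\pG}\subset Q_1$, the function $v=\bS_q\psi$ vanishes on the boundary near $Q_2$, so on a neighborhood of $Q_2$ it satisfies a \emph{local} elliptic equation $P_0v-qv=0$ with zero boundary data, and one expects the sup-norm of such a $v$ on $Q_2$ to be controlled by $q^{-1}$ times its global sup-norm. First I would fix a cutoff function $\zeta\in C^\infty(\bbR^2)$ with $\zeta\equiv1$ on a neighborhood of $Q_2$ and $\supp\zeta\cap\overline{Q_1}=\varnothing$ (possible because $\dist(Q_1,Q_2)>0$); set $w=\zeta v$. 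Then $w\in C(\oG)\cap W_{2,\loc}^2(G)$, $w$ vanishes on all of $\pG$ (on $\supp\zeta\cap\pG$ because $v$ does there, elsewhere because $\zeta=0$), and
$$
P_0 w - qw = \zeta(P_0v-qv) + [P_0,\zeta]v = [P_0,\zeta]v =: h,
$$
where the commutator $[P_0,\zeta]$ is a first-order operator with smooth coefficients supported in $\supp(\nabla\zeta)$, a set disjoint from $Q_2$. Crucially $\|h\|_{C(\oG)}\le C\|v\|_{W^1_\infty(\supp\nabla\zeta)}$, and since $\supp\nabla\zeta$ is a compact subset of $G$ (away from $Q_1\cup\pG$? — this needs care, see below) interior/boundary Schauder estimates on a fixed domain give $\|v\|_{W^1_\infty(\supp\nabla\zeta)}\le C\|v\|_{C(\oG)}\le Cc_1\|\psi\|_{\cC_\cK(\pG)}$, with $C$ independent of $q$ (here one uses that the zeroth-order term $p_0-q\le0$ only helps).

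Next I would estimate $w$ on $Q_2$ by applying the Maximum Principle~\ref{mp2} to the function $w$ on the domain $D=G$ (or a suitable subdomain containing $Q_2$ on which $w$ vanishes on the boundary). Consider $m=\max_{\oG}w$ and $\mu=\min_{\oG}w$. If $m>0$ it is attained at an interior point $y^0\in G$ (since $w=0$ on $\pG$), where $P_0w(y^0)\le0$ by the Maximum Principle, hence $qw(y^0)=P_0w(y^0)-h(y^0)\le\|h\|_{C(\oG)}$, i.e. $m\le q^{-1}\|h\|_{C(\oG)}$; applying the same to $-w$ gives $|\mu|\le q^{-1}\|h\|_{C(\oG)}$. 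Therefore
$$
\|v\|_{C(Q_2)}=\|w\|_{C(Q_2)}\le\|w\|_{C(\oG)}\le\frac{1}{q}\|h\|_{C(\oG)}\le\frac{c_2}{q}\|\psi\|_{\cC_\cK(\pG)},
$$
which is exactly the claimed bound.

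The main obstacle is the a priori estimate for $\|v\|$ in the $W^1_\infty$ (or $C^1$) norm on $\supp\nabla\zeta$ uniformly in $q$: the support of $\nabla\zeta$ may touch the boundary $\pG$ (it only has to avoid $Q_2$, not $\pG$), so I cannot simply invoke interior estimates but must use boundary regularity up to $\pG$, where $v$ vanishes on $\pG$ and solves $P_0v-qv=0$ — however near the conjugation points $\cK$ the coefficients $b_{is}$ enter and $v$ need not be $C^1$ up to $\cK$. The remedy is to choose $\zeta$ so that $\supp\nabla\zeta$ stays a positive distance from $\cK$: this is possible because $v\in C_\cK(\oG)$ means $\supp(\bS_q\psi)|_{\pG}$ being inside $Q_1$ already forces the relevant boundary portion near $Q_2$ to be away from $Q_1$, and one shrinks the transition region of $\zeta$ into a compact piece of $\oG\setminus(\cK\cup Q_1)$ where standard Schauder boundary estimates with Dirichlet data apply with constants independent of $q$ (again because $-q$ contributes a good sign). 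One should also verify the constant $c_2$ does not absorb a hidden $q$-dependence: this is ensured because the only place $q$ appears is the favorable zeroth-order coefficient, and the Maximum Principle step converts it into the explicit $1/q$ factor. Finally, a routine check that $w$ is admissible for Maximum Principle~\ref{mp2} — namely $w\in C(G)$ and $P_0w\in C(G)$ — follows from $v\in W^2_{2,\loc}(G)\cap C^\infty(G)$ (the latter from Theorem~\ref{th1-2} since $f_0=0$) and smoothness of $\zeta$.
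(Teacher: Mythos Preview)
Your cutoff-and-commutator approach has a genuine gap at the gradient estimate. You claim that Schauder estimates yield $\|v\|_{W^{1,\infty}(\supp\nabla\zeta)}\le C\|v\|_{C(\oG)}$ with $C$ independent of $q$ because ``the zeroth-order term $p_0-q\le0$ only helps,'' but this is false for derivative bounds: the Schauder constant depends on the $L^\infty$ norm of \emph{all} coefficients, and the sign condition helps only for the maximum principle. A scaling argument shows what actually happens: for solutions of $\Delta v-qv=0$ the substitution $x\mapsto x/\sqrt q$ reduces to the fixed operator $\Delta-1$, whence interior gradient estimates read $|\nabla v|\lesssim\sqrt q\,\|v\|_{L^\infty}$. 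Plugging this into your scheme gives $\|h\|\lesssim\sqrt q\,\|\psi\|$ and hence only $\|w\|_{C(Q_2)}\lesssim q^{-1/2}\|\psi\|$, short of the stated $q^{-1}$. There is a second obstruction when $Q_2$ meets $\pG$: then $\supp\nabla\zeta$ necessarily meets $\pG$ as well, and since $\psi\in\cC_\cK(\pG)$ is merely continuous, $v=\bS_q\psi$ need not be $C^1$ up to the boundary there, so $h=[P_0,\zeta]v$ need not even be bounded on $G$ and your maximum-principle inequality $qw(y^0)\le\sup_G|h|$ becomes vacuous.

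The paper bypasses derivatives entirely. It invokes a comparison estimate (Lemma~1.3 in~\cite{GalSkMs}) asserting that any solution of $P_0v-qv=0$ whose boundary trace is supported in $Q_1$ satisfies $\|v\|_{C(Q_2)}\le (k/q)\,\|v|_{\pG}\|_{C(\pG)}$ for $q\ge q_1$, and then chains this with $\|v|_{\pG}\|\le\|v\|_{C(\oG)}\le c_1\|\psi\|$ from Theorem~\ref{th1-2}. Such a comparison bound is obtained by a barrier of the form $\Phi=\eta+A/q$, where $\eta\in C^\infty$ is fixed with $\eta=1$ on $Q_1$, $\eta=0$ on $Q_2$, $0\le\eta\le1$: one checks $(P_0-q)\Phi\le P_0\eta-A\le0$ for $A$ large (using $p_0\le0$), so $|v|\le\|v|_{\pG}\|\,\Phi$ by the maximum principle, and on $Q_2$ this yields $|v|\le (A/q)\|v|_{\pG}\|$. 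If you wish to rescue your argument, replacing the commutator step by this barrier comparison is the cleanest fix.
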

\begin{proof}
Using\footnote{It is supposed in Lemma 1.3 in~\cite{GalSkMs} that
the boundary of domain is infinitely smooth. This assumption is
needed to prove the existence of classical solution for elliptic
equations with nonhomogeneous boundary condition. However, this
assumption is needless for the validity of the first inequality in
\eqref{eq55_0}, provided that the solution exists.} Lemma 1.3 in
\cite{GalSkMs} and Theorem~\ref{th1-2}, we obtain
\begin{equation}\label{eq55_0}
\|\bS_q\psi\|_{C(Q_2)}\le \dfrac{k}{q}
\|(\bS_q\psi)|_{\pG}\|_{C(\pG)}\le
\dfrac{k}{q}\|\bS_q\psi\|_{C(\overline
G)}\le\dfrac{kc_1}{q}\|\psi\|_{\cC_\cK(\pG)},\qquad q\ge q_1,
\end{equation}
where the number $q_1$ defined in Theorem~\ref{th1-2} is assumed
to be large enough so that Lemma 1.3 in~\cite{GalSkMs} be valid
for $q\ge q_1$; the number $k=k(q_1)$ does not depend on $\psi$
and $q$.
\end{proof}

\begin{lemma}\label{l5}
Let Conditions~$\ref{cond1.1}$--$\ref{cond1.2}$ hold,  let $Q_1$
and $Q_2$ be the same sets as in Lemma $\ref{l4}$, and let $q\ge
q_1$. We additionally suppose that $Q_2\cap\cK=\varnothing$. Then
the inequality
$$
\|\bS_q\psi\|_{C(Q_2)}\le\dfrac{c_3}{q}\|\psi\|_{\cC_\cK(Q_1)},\qquad
q\ge q_1,
$$
holds for any $\psi\in\cC_\cK(\pG)$ such that $\supp \psi\subset
Q_1;$ here $c_3>0$ does not depend on $\psi$ and $q$.
\end{lemma}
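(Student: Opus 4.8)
The plan is to reduce problem~\eqref{eq47-48} with $f_0=0$ to the local Dirichlet problem and then estimate the Dirichlet solution piece by piece, using the interior decay estimate already employed in the proof of Lemma~\ref{l4} (Lemma 1.3 in~\cite{GalSkMs}) together with the global bound of Theorem~\ref{th1-2}. Write $u=\bS_q\psi$ and let $\bD_q$ denote the solution operator of the Dirichlet problem $P_0 v-qv=0$ in $G$, $v|_{\pG}=\phi$; for $q\ge q_1$ this problem is uniquely solvable in the classical sense for continuous data, and the Maximum Principle gives $\|\bD_q\phi\|_{C(\oG)}\le\|\phi\|_{C(\pG)}$. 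Since $u$ satisfies $P_0u-qu=0$ in $G$ and $u|_{\Gamma_i}=\psi_i+\bB_i u$ on $\Gamma_i$, the function $u$ is the Dirichlet solution with data $\phi=\psi+\bB u$, where $\bB u=\{\bB_i u\}\in\cC_\cK(\pG)$ is supported in $\overline{\cO_\varepsilon(\cK)}\cap\pG$ and, by Condition~\ref{cond1.2} (part~1) and Theorem~\ref{th1-2},
\[
\|\bB u\|_{\cC_\cK(\pG)}\le\|u\|_{C(\oG)}\le c_1\|\psi\|_{\cC_\cK(\pG)}.
\]

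Put $\rho=\dist(Q_2,\cK)>0$ and fix $\theta\in C^\infty(\bbR^2)$, $0\le\theta\le1$, with $\theta\equiv1$ on $\cO_{\rho/3}(\cK)$ and $\supp\theta\subset\cO_{\rho/2}(\cK)$. By linearity and uniqueness of the Dirichlet problem,
\[
u=\bD_q\psi+\bD_q(\theta\,\bB u)+\bD_q\bigl((1-\theta)\bB u\bigr),
\]
and I would estimate the three summands on $Q_2$ separately. (i) Since $\supp\psi\subset Q_1$ and $Q_1\cap Q_2=\varnothing$, Lemma 1.3 in~\cite{GalSkMs} gives $\|\bD_q\psi\|_{C(Q_2)}\le(k/q)\|\psi\|_{\cC_\cK(\pG)}$. (ii) Since $\supp(\theta\,\bB u)\subset\overline{\cO_{\rho/2}(\cK)}\cap\pG$ is at distance $\ge\rho/2$ from $Q_2$, the same lemma gives $\|\bD_q(\theta\,\bB u)\|_{C(Q_2)}\le(k/q)\|\bB u\|_{\cC_\cK(\pG)}\le(kc_1/q)\|\psi\|_{\cC_\cK(\pG)}$. (iii) For $(1-\theta)\bB u$ I would show it is itself $O(1/q)$: if $z\in\Gamma_i$ belongs to its support, then $\dist(z,\cK)\ge\rho/3$ and $z\in\supp b_{is}$, so $z$ ranges over a compact subset of $\Gamma_i\cap\cO_\varepsilon(\cK)$, whence the images $\Omega_{is}(z)$ lie in a fixed compact set $R\subset G$ with $\dist(R,\pG)>0$; applying Lemma 1.3 in~\cite{GalSkMs} to $u=\bD_q(u|_{\pG})$ with the pair $\pG$, $R$ yields $\|u\|_{C(R)}\le(k/q)\|u\|_{C(\oG)}\le(kc_1/q)\|\psi\|_{\cC_\cK(\pG)}$, and since $b_{is}\ge0$, $\sum_s b_{is}\le1$, $0\le\theta\le1$ we get $\|(1-\theta)\bB u\|_{\cC_\cK(\pG)}\le\|u\|_{C(R)}$, so by the Maximum Principle $\|\bD_q((1-\theta)\bB u)\|_{C(Q_2)}\le(kc_1/q)\|\psi\|_{\cC_\cK(\pG)}$. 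Adding (i)--(iii) and using $\|\psi\|_{\cC_\cK(\pG)}=\|\psi\|_{\cC_\cK(Q_1)}$ (valid since $\supp\psi\subset Q_1$) gives the claim, with $c_3$ depending only on the configuration.

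The main obstacle is precisely the gap between this lemma and Lemma~\ref{l4}: the trace $u|_{\pG}=\psi+\bB u$ is \emph{not} supported in $Q_1$, because it carries the nonlocal contribution $\bB u$ near $\cK$, so Lemma~\ref{l4} cannot be invoked directly once $Q_2$ lies within distance $\varepsilon$ of $\cK$. The cutoff splitting is what resolves this: the part of $\bB u$ genuinely near $\cK$ is killed by the fixed separation $\dist(Q_2,\cK)>0$, whereas the part living on the collar $\{\rho/3\le\dist(\cdot,\cK)\le\varepsilon\}$ only probes $u$ at the interior points $\Omega_{is}(z)$, which stay a fixed positive distance from $\pG$ and where $u=O(1/q)$ by interior decay. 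The one point requiring care is to verify that these images form a single compact subset of the open domain $G$, so that $\dist(R,\pG)>0$; this follows from $\Omega_{is}(\Gamma_i\cap\cO_\varepsilon(\cK))\subset G$ together with the facts that $\supp b_{is}$ is a compact subset of $\cO_\varepsilon(\cK)$ and that $\dist(z,\cK)\ge\rho/3$ on the relevant part of the boundary.
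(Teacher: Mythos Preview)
Your proof is correct. The ingredients are the same as in the paper---interior decay (Lemma~1.3 in~\cite{GalSkMs}), the global bound of Theorem~\ref{th1-2}, and the fact that $\Omega_{is}$ carries boundary points at positive distance from $\cK$ into a fixed compact subset of $G$---but you organise them differently. The paper places a cutoff $\xi$ around $Q_2$ and splits the solution as $u=v+w$, where $v$ solves the Dirichlet problem with data $\xi\,u|_{\pG}$; on the collar $Q_{2,2\sigma}$ the trace reduces to $\bB u$ (since $\psi$ vanishes there), and this is then estimated by interior decay at the image points. You instead place the cutoff $\theta$ around $\cK$ and split the \emph{boundary data} $u|_{\pG}=\psi+\theta\bB u+(1-\theta)\bB u$, treating each summand by the appropriate mechanism: the first two have supports disjoint from $Q_2$, and the third is itself $O(1/q)$ because it only samples $u$ on the interior compact set $R$. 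Your decomposition is arguably more transparent, since it isolates the three sources of boundary trace explicitly; the paper's version has the minor advantage that it only invokes the Dirichlet solution operator once (for $v$) rather than three times. Either way the constant $c_3$ depends on $\dist(Q_2,\cK)$ through the compact image set, which is consistent with the statement.
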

\begin{proof}
1. Consider a number $\sigma>0$ such that
\begin{equation}\label{eq55_2}
\dist(Q_1,Q_2)>3\sigma,\qquad \dist(\cK, Q_2)>3\sigma.
\end{equation}
Introduce  a function $\xi\in C^\infty(\bbR^2)$ such that $0\le
\xi(y)\le1$, $\xi(y)=1$ for $\dist (y,Q_2)\le\sigma$, and
$\xi(y)=0$ for $\dist (y,Q_2)\ge 2\sigma$.

Consider the auxiliary problem
\begin{equation}\label{eq55_3-4}
P_0v-q v=0,\  y\in G;\qquad v(y)=\xi(y)u(y),\ y\in\pG,
\end{equation}
where $u=\bS_q\psi\in C_\cK(\overline G)$.  Applying
Theorem~\ref{th1-2} with $\bB_i =0$, we see that there is a unique
solution $v\in C^\infty(G)\cap C(\overline G)$ of problem
\eqref{eq55_3-4}. If follows from  Maximum Principle~\ref{mp2} and
from the definition of the function $\xi$ that
\begin{equation}\label{eq55_5}
\|v\|_{C(\overline G)}\le \|\xi
u\|_{C(\pG)}\le\max\limits_{i=1,\dots,N}
\|u|_{Q_{2,2\sigma}\cap\overline{\Gamma_i}}\|_{C(Q_{2,2\sigma}\cap\overline{\Gamma_i})},
\end{equation}
where $Q_{2,2\sigma}=\{y\in\pG: \dist(y,Q_2)\le 2\sigma\}$.

Since $\supp \psi\cap Q_{2,2\sigma}=\varnothing$, it follows that
\begin{equation}\label{eq55_6}
u-\bB_i  u=0,\qquad y\in Q_{2,2\sigma}\cap\overline{\Gamma_i}.
\end{equation}
Taking into account that $\bB_i u=0$ for
$y\notin\cO_\varepsilon(\cK)$, we deduce from~\eqref{eq55_6} that
\begin{equation}\label{eq55_7}
u(y)=0,\qquad y\in [Q_{2,2\sigma}\cap\overline{\Gamma_i}]\setminus
\cO_\varepsilon(\cK).
\end{equation}

Using~\eqref{eq55_5}--\eqref{eq55_7}, the definition of the
operators $\bB_i $, and Condition~\ref{cond1.2}, we obtain
\begin{equation}\label{eq55_8}
\begin{aligned}
\|v\|_{C(\overline G)}&\le \max\limits_{i=1,\dots,N}
\|u|_{Q_{2,2\sigma}\cap\overline{\Gamma_i}\cap\overline{\cO_\varepsilon(\cK)}}\|_{
C(Q_{2,2\sigma}\cap\overline{\Gamma_i}\cap\overline{\cO_\varepsilon(\cK)})}\\
&\le \max\limits_{i=1,\dots,N}\max\limits_{s=1,\dots,S_i}
\|u|_{\Omega_{is}(Q_{2,2\sigma}\cap\overline{\Gamma_i}\cap\overline{\cO_\varepsilon(\cK)})}\|_{
C(\Omega_{is}(Q_{2,2\sigma}\cap\overline{\Gamma_i}\cap\overline{\cO_\varepsilon(\cK)}))}.
\end{aligned}
\end{equation}

Since $Q_{2,2\sigma}\cap\cK=\varnothing$ (see~\eqref{eq55_2}), it
follows from the definition of the transformations $\Omega_{is}$
that
$$
\Omega_{is}(Q_{2,2\sigma}\cap\overline{\Gamma_i}\cap\overline{\cO_\varepsilon(\cK)}))\subset
G.
$$
Therefore, using inequality~\eqref{eq55_8} and Lemma~\ref{l4} with
$Q_1$ and $Q_2$ replaced by $\pG$ and
$\Omega_{is}(Q_{2,2\sigma}\cap\overline{\Gamma_i}\cap\overline{\cO_\varepsilon(\cK)}))$,
we have
\begin{equation}\label{eq55_9}
\|v\|_{C(\overline G)}\le \dfrac{c_2}{q}\|\psi\|_{\cC_\cK(\pG)}.
\end{equation}

2. Set $w=u-v$. Clearly, the function $w$ satisfies the relations
$$
P_0w-q w =0,\  y\in G;\qquad w(y)=u(y)-v(y) =0,\  y\in
Q_{2,\sigma}.
$$
Applying Lemma~\ref{l4} with $\overline{\pG\setminus
Q_{2,\sigma}}$ substituted for $Q_1$ and $\bB_i=0$ and taking into
account that $w|_{\pG}=(1-\xi)  u|_{\pG}$, we obtain
$$
\|w\|_{C(Q_2)}\le
\dfrac{c_2}{q}\|w|_{\pG}\|_{C(\pG)}\le\dfrac{c_2}{q}\|u\|_{C(\overline
G)}.
$$
The latter inequality and Theorem~\ref{th1-2} imply
$$
\|w\|_{C(Q_2)}\le \dfrac{c_2c_1}{q}\|\psi\|_{\cC_\cK(\pG)}.
$$
Combining this estimate with~\eqref{eq55_9}, we complete the
proof.
\end{proof}

\section{Bounded Perturbations of Elliptic Operators and Their
Properties}\label{subsectBoundedHypoth}

Introduce a linear operator $P_1$ satisfying the following
condition.
\begin{condition}\label{cond2.1'}
The operator $P_1: C(\overline G)\to C(\overline G)$ is bounded,
and $P_1 u(y^0)\le 0$ whenever $u\in C(\overline G)$ achieves its
positive maximum at the point $y^0\in G$.
\end{condition}

The operator $P_1$ will play the role of a bounded perturbation
for unbounded elliptic operators in the spaces of continuous
functions (cf.~\cite{GalSkMs, GalSkJDE}).

The following result is a consequence of Conditions~\ref{cond1.1}
and~\ref{cond2.1'} and Maximum Principle~\ref{mp2}.

\begin{lemma}\label{l2.1}
Let Conditions $\ref{cond1.1}$ and $\ref{cond2.1'}$ hold. If a
function $u\in C(\oG)$ achieves its positive maximum at a point
$y^0\in G$ and $P_0u\in C(G)$, then $P_0u(y^0)+P_1 u(y^0)\le0$.
\end{lemma}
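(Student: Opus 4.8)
The plan is to reduce the statement to the Maximum Principle~\ref{mp2} by splitting the perturbation $P_1$ off from $P_0$ and treating it as an honest value of the function at $y^0$, rather than a differential expression. First I would observe that since $u\in C(\oG)$ achieves its positive maximum over $\oG$ at an interior point $y^0\in G$, a fortiori $u$ attains its positive maximum \emph{over the open set $G$} at $y^0$; moreover $P_0u\in C(G)$ by hypothesis. Hence Maximum Principle~\ref{mp2}, applied with $D=G$ (Condition~\ref{cond1.1} holds for $G$), gives directly
\begin{equation}\label{eq-pp1}
P_0u(y^0)\le 0.
\end{equation}

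Next I would handle the $P_1$ term. By Condition~\ref{cond2.1'}, the operator $P_1$ is such that $P_1u(y^0)\le 0$ precisely because $u\in C(\oG)$ attains its positive maximum at the interior point $y^0\in G$ — which is exactly the hypothesis of the lemma. So
\begin{equation}\label{eq-pp2}
P_1u(y^0)\le 0.
\end{equation}
Adding \eqref{eq-pp1} and \eqref{eq-pp2} yields $P_0u(y^0)+P_1u(y^0)\le 0$, which is the claim.

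The point worth checking — and the only place where anything could go wrong — is the compatibility of the domains: Maximum Principle~\ref{mp2} is stated for a maximum \emph{interior} to the domain $D$ on which $P_0$ is controlled, while Condition~\ref{cond2.1'} is phrased for a positive maximum at a point of $G$ (not merely of $\oG$). Both hypotheses are met here because $y^0$ is assumed to lie in the open set $G$, so there is no subtlety with boundary points; one only needs to note that a positive maximum over $\oG$ attained at an interior point is also a local (indeed global-on-$G$) positive maximum, so both the Maximum Principle and Condition~\ref{cond2.1'} apply verbatim. Thus the lemma is essentially an immediate superposition of the two facts, and no real obstacle arises; the proof is a two-line combination of Maximum Principle~\ref{mp2} with Condition~\ref{cond2.1'}.
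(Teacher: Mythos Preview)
Your proof is correct and matches the paper's approach exactly: the paper simply states that the lemma is a consequence of Conditions~\ref{cond1.1} and~\ref{cond2.1'} together with Maximum Principle~\ref{mp2}, which is precisely the two-line superposition you wrote out.
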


In this paper, we consider the following nonlocal conditions in
the
 {\it nontransversal} case:
\begin{equation}\label{eq56}
b(y)u(y)+\int\limits_{\oG}[u(y)-u(\eta)]\mu(y,d\eta)=0,\qquad
y\in\pG,
\end{equation}
where $b(y)\ge0$ and $\mu(y,\cdot)$ is a nonnegative Borel measure on
$\oG$.

Set $
 \cN=\{y\in\pG: \mu(y,\oG)=0\}$ and $\cM=\pG\setminus \cN.$
Assume that $\cN$ and $\cM$ are Borel sets.

\begin{condition}\label{cond2.1''}
$\cK\subset \cN$.
\end{condition}

Introduce the function $ b_0(y)=b(y)+\mu(y,\oG). $

\begin{condition}\label{cond2.2}
$b_0(y)>0$ for $y\in\pG$.
\end{condition}

Conditions~\ref{cond2.1''} and~\ref{cond2.2} imply that relation
\eqref{eq56} can be written as follows:
\begin{equation}\label{eq57}
u(y)-\int\limits_\oG u(\eta)\mu_i(y,d\eta)=0,\ y\in\Gamma_i;\qquad
u(y) =0,\  y\in\cK,
\end{equation}
where $ \mu_i(y,\cdot)=\dfrac{\mu(y,\cdot)}{b_0(y)},\
y\in\Gamma_i. $ By the definition of the function $b_0(y)$, we
have
\begin{equation}\label{eq59}
\mu_i(y,\oG)\le 1,\qquad y\in\Gamma_i.
\end{equation}

For any set $Q$, we denote by $\chi_Q(y)$ the function equal to
one on $Q$ and vanishing on $\bbR^2\setminus Q$.

Let $b_{is}(y)$ and $\Omega_{is}$ be the same  as above. We
introduce the measures $\delta_{is}$ as follows:
$$
\delta_{is}(y,Q)=\left\{
\begin{aligned}
&b_{is}(y)\chi_Q(\Omega_{is}(y)),& &y\in\Gamma_i\cap\cO_\varepsilon(\cK),\\
&0,& &y\in\Gamma_i\setminus\cO_\varepsilon(\cK),
\end{aligned}\right.
$$
for any Borel set $Q$.

 We study those  measures $\mu_i(y,\cdot)$ which
can be represented in the form
\begin{equation}\label{eq61}
\mu_i(y,\cdot)=\sum\limits_{s=1}^{S_i}\delta_{is}(y,\cdot)+\alpha_i(y,\cdot)+\beta_i(y,\cdot),\qquad
y\in\Gamma_i,
\end{equation}
where $\alpha_i(y,\cdot)$ and $\beta_i(y, \cdot)$ are nonnegative Borel measures to be
specified below (cf.~\cite{GalSkMs,GalSkJDE}).

For any Borel measure $\mu(y,\cdot)$, the closed set $
\spt\mu(y,\cdot)=\oG\setminus\bigcup\limits_{V\in T}\{V\in T:
\mu(y,V\cap\oG)=0\} $ (where~$T$ denotes the set of all open sets
in $\bbR^2$) is called the {\it support} of the measure
$\mu(y,\cdot)$.

\begin{condition}\label{cond2.3}
There exist numbers $\varkappa_1>\varkappa_2>0$ and $\sigma>0$
such that
\begin{enumerate}
\item
$\spt\alpha_i(y,\cdot)\subset\oG\setminus\cO_{\varkappa_1}(\cK)$
for $y\in\Gamma_i$,
\item
$\spt\alpha_i(y,\cdot)\subset\overline{G_\sigma}$ for
$y\in\Gamma_i\setminus\cO_{\varkappa_2}(\cK),$
\end{enumerate}
where
$\cO_{\varkappa_1}(\cK)=\{y\in\bbR^2:\dist(y,\cK)<\varkappa_1\}$
and $G_\sigma=\{y\in G:\dist(y,\pG)<\sigma\}.$
\end{condition}

\begin{condition}\label{cond2.4}
 $\beta_i(y,\cM)<1$ for $y\in\Gamma_i\cap\cM$, $i=1,\dots,N$.
\end{condition}

\begin{remark}
Condition~\ref{cond2.4} is weaker than (analogous) Condition 2.2
in~\cite{GalSkMs} or Condition 3.2 in~\cite{GalSkJDE} because the
latter two  require that $\mu_i(y,\cM)<1$ for
$y\in\Gamma_i\cap\cM$.
\end{remark}

\begin{remark}
One can show that Conditions~\ref{cond2.2}--\ref{cond2.4} imply
that $ b(y)+\mu(y,\oG\setminus\{y\})>0,\ y\in\pG, $ i.e., the
boundary-value condition~\eqref{eq56} disappears nowhere on the
boundary.
\end{remark}

Using relations~\eqref{eq61}, we write nonlocal conditions
\eqref{eq57} in the form
\begin{equation}\label{eq63}
u(y)-\bB_i u(y)-\bB_{\alpha i}u(y)-\bB_{\beta i}u(y) =0,\
y\in\Gamma_i;\qquad u(y) =0,\ y\in\cK,
\end{equation}
where the operators $\bB_i $ are given by~\eqref{eq3'} and
$$
\bB_{\alpha i}u(y)=\int\limits_\oG u(\eta)\alpha_i(y,
d\eta),\qquad \bB_{\beta i}u(y)=\int\limits_\oG u(\eta)\beta_i(y,
d\eta),\qquad y\in\Gamma_i.
$$

Introduce the space\footnote{Clearly, nonlocal conditions
\eqref{eq56} in the definition of the space $C_B(\oG)$ can be
replaced by conditions~\eqref{eq57} or~\eqref{eq63}.} $
C_B(\oG)=\{u\in C(\oG): u\ \text{satisfy nonlocal conditions
\eqref{eq56}}\}. $

It follows from the definition of the space $C_B(\oG)$ and from
Condition~\ref{cond2.1''} that\footnote{The  spaces $C_\cN(\cdot)$
and $C_\cK(\cdot)$ are given in \eqref{eqC_K}.}
\begin{equation}\label{eqBNK}
C_B(\oG)\subset C_\cN(\oG)\subset C_\cK(G).
\end{equation}

\begin{lemma}\label{l2.3}
Let Conditions $\ref{cond1.1}$--$\ref{cond1.2}$ and
$\ref{cond2.1'}$--$\ref{cond2.4}$ hold. Let a function $u\in
C_B(\oG)$ achieve its positive maximum at a point $y^0\in\overline
G$ and $P_0u\in C(G)$. Then there is a point $y^1\in G$ such that
$u(y^1)=u(y^0)$ and $P_0u(y^1)+P_1u(y^1)\le 0$.
\end{lemma}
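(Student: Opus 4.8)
The plan is to reduce the statement about the perturbed operator $P_0+P_1$ on the nonlocal space $C_B(\oG)$ to the already-established local Maximum Principle~\ref{mp2} and to Lemma~\ref{l2.1}. First I would dispose of the trivial case: if the positive maximum of $u$ is already attained at an interior point $y^0\in G$, then Lemma~\ref{l2.1} gives the conclusion immediately with $y^1=y^0$. So the real work is the case where the maximum $M=u(y^0)=\max_{\oG}u>0$ is attained only on $\pG$, say at some $y^0\in\Gamma_{i_0}$ (if $y^0\in\cK$ then $u(y^0)=0$ by~\eqref{eqBNK}, contradicting $M>0$, so $y^0$ lies on one of the smooth curves). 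The idea is to use the nonlocal condition~\eqref{eq63} at $y^0$ together with the bounds $\sum_s b_{i_0 s}\le 1$, $\alpha_{i_0}\ge0$, $\beta_{i_0}\ge0$, and the normalization~\eqref{eq59} to force the value $M$ to be ``propagated'' to another point of $\oG$, and to iterate this propagation until it lands at an interior point.

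Concretely, at $y^0$ we have $u(y^0)=\bB_{i_0}u(y^0)+\bB_{\alpha i_0}u(y^0)+\bB_{\beta i_0}u(y^0)$, i.e.
\[
M=\sum_{s=1}^{S_{i_0}} b_{i_0 s}(y^0)\,u(\Omega_{i_0 s}(y^0))+\int_{\oG}u(\eta)\,\alpha_{i_0}(y^0,d\eta)+\int_{\oG}u(\eta)\,\beta_{i_0}(y^0,d\eta).
\]
Since every $u(\cdot)\le M$ on $\oG$, and since the total mass of the combined measure is $\sum_s b_{i_0 s}(y^0)+\alpha_{i_0}(y^0,\oG)+\beta_{i_0}(y^0,\oG)=\mu_{i_0}(y^0,\oG)\le 1$ by~\eqref{eq61} and~\eqref{eq59}, the only way equality can hold is if (a) the total mass equals $1$ and (b) $u(\eta)=M$ for $\mu_{i_0}(y^0,\cdot)$-almost every $\eta$, hence (the measure being a finite sum of an atomic part and the Borel measures $\alpha,\beta$, all supported in the closed set where $u=M$) $u\equiv M$ on $\spt\mu_{i_0}(y^0,\cdot)$. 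In particular each point $\Omega_{i_0 s}(y^0)$ with $b_{i_0 s}(y^0)>0$ satisfies $u(\Omega_{i_0 s}(y^0))=M$; these points lie strictly inside $G$ by the very construction of the transformations $\Omega_{is}$ (they take $\Gamma_i\cap\cO_\varepsilon(\cK)$ into $G$), \emph{provided} at least one such $s$ contributes, i.e. provided $y^0\in\cO_\varepsilon(\cK)$ and some $b_{i_0 s}(y^0)>0$. If that happens, we are done with $y^1=\Omega_{i_0 s}(y^0)\in G$.

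The main obstacle is exactly when no atomic $\delta_{is}$ term is ``active'' at the maximizing boundary point — either because $y^0\notin\cO_\varepsilon(\cK)$, or because all $b_{i_0 s}(y^0)=0$, or because the mass there is carried entirely by $\alpha_{i_0}$ and $\beta_{i_0}$. In that situation one has to push the argument through the nonatomic parts. For the $\alpha_i$-part this is manageable: by Condition~\ref{cond2.3}(1) its support avoids a neighborhood of $\cK$, and the maximum-set argument forces $u\equiv M$ on $\spt\alpha_{i_0}(y^0,\cdot)$; if this support meets $G$ we again finish. The genuinely delicate piece is the $\beta_i$-part, whose support may touch $\pG$ arbitrarily; here Condition~\ref{cond2.4} ($\beta_i(y,\cM)<1$) is the crucial input. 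I expect the argument to run as follows: consider the (closed, by continuity of $u$) set $F=\{y\in\oG:u(y)=M\}$; if $F\cap G\ne\varnothing$ we are done, so suppose $F\subset\pG$. Then at every point $y\in F$ the equality analysis above applies, and since $\mu_i(y,\cdot)$ must have total mass $1$ and be concentrated on $F\subset\pG$, and since $F\cap\cN=\varnothing$ (on $\cN$ we would need $\mu_i(y,\oG)=1$ contradicting $\mu(y,\oG)=0$), we get $F\subset\cM$; but then $\beta_i(y,F)\le\beta_i(y,\cM)<1$ for $y\in F$, while $\sum_s b_{is}(y)\chi_F(\Omega_{is}(y))$ can be positive only if some $\Omega_{is}(y)\in F\subset\pG\cap\cO_\varepsilon(\cK)$ — impossible since $\Omega_{is}$ maps into $G$ — so the atomic part contributes $0$ to $\mu_i(y,F)$, and likewise $\alpha_i(y,F)=0$ would follow if $\spt\alpha_i(y,\cdot)$ met $G$; the upshot is a strict inequality $\mu_i(y,F)<1$, contradicting the requirement that all the mass sit on $F$. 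This contradiction shows $F\cap G\ne\varnothing$, and choosing $y^1\in F\cap G$ and invoking Lemma~\ref{l2.1} (applicable since $P_0u\in C(G)$ and $u$ attains its positive maximum at the interior point $y^1$) completes the proof. The only place requiring real care is handling the points of $\overline{\Gamma_i}\cap\overline{\Gamma_j}\subset\cK$ — but those lie in $\cK\subset\cN$ and carry no measure, so they cause no trouble; and Condition~\ref{cond1.2}(2) is only needed to keep the reduction~\eqref{eq57}–\eqref{eq63} consistent there.
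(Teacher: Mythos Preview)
Your overall strategy---assume the maximum set $F=\{y\in\oG:u(y)=M\}$ lies entirely on $\pG$, derive from the nonlocal condition that $\mu_i(y,F)=\mu_i(y,\oG)=1$ for every $y\in F$, and reach a contradiction---is the same as the paper's, and your treatment of the atomic part $\delta_{is}$ (via $\Omega_{is}(\Gamma_i\cap\cO_\varepsilon(\cK))\subset G$) and of the $\beta_i$-part (via Condition~\ref{cond2.4}) is correct. But there is a genuine gap in your handling of $\alpha_i$. Your sentence ``likewise $\alpha_i(y,F)=0$ would follow if $\spt\alpha_i(y,\cdot)$ met $G$'' does not dispose of the remaining case $\spt\alpha_i(y,\cdot)\subset\pG$: Condition~\ref{cond2.3}(1) alone allows $\alpha_i(y,\cdot)$ to place all its mass on $\pG\setminus\cO_{\varkappa_1}(\cK)$, and if that set meets $F$ then $\alpha_i(y,F)+\beta_i(y,F)$ can perfectly well equal $1$ even though $\beta_i(y,F)<1$. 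You never invoke Condition~\ref{cond2.3}(2), and without it the asserted strict inequality $\mu_i(y,F)<1$ is unjustified.

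The paper closes this gap with a two-step bootstrap. First, if $\alpha_i(y^0,\pG\setminus\cO_{\varkappa_1}(\cK))=0$ then (together with $b_{is}(y^0)=0$) one has $\mu_i(y^0,\cdot)=\beta_i(y^0,\cdot)$ supported on $\pG$, and your $\beta$-argument finishes; so in fact $\alpha_i(y^0,\pG\setminus\cO_{\varkappa_1}(\cK))>0$, and now Condition~\ref{cond2.3}(2)---which forces $\spt\alpha_i(y,\cdot)\subset\overline{G_\sigma}\subset G$ whenever $y\notin\cO_{\varkappa_2}(\cK)$---implies $y^0\in\cO_{\varkappa_2}(\cK)$. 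Second, since the $\alpha_i(y^0,\cdot)$-mass sits on $\pG\setminus\cO_{\varkappa_1}(\cK)$ and $u\equiv M$ on $\spt\mu_i(y^0,\cdot)$, there exists $y'\in F\cap(\pG\setminus\cO_{\varkappa_1}(\cK))$; but $\varkappa_1>\varkappa_2$, so $y'\notin\cO_{\varkappa_2}(\cK)$, and repeating the first step at $y'$ yields the contradiction. In your set-based language: you must split $F$ into $F\cap\cO_{\varkappa_2}(\cK)$ and $F\setminus\cO_{\varkappa_2}(\cK)$, use Condition~\ref{cond2.3}(2) to show any point in the second piece already gives $\mu_i(y,F)<1$, and then use Condition~\ref{cond2.3}(1) to show that any point in the first piece forces the second piece to be nonempty.
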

\begin{proof}
1. If $y^0\in G$, then the conclusion of the lemma follows from
Lemma~\ref{l2.1}. Let $y^0\in\pG$. Suppose that the lemma is not
true, i.e., $u(y^0)>u(y)$ for $y\in G$.

Since $u(y^0)>0$ and $u\in C_B(\oG)\subset C_\cN(\oG)$, it follows
that $y^0\in \cM$. Let $y^0\in\Gamma_i\cap \cM$ for some $i$. If
$\mu_i(y^0,G)>0$, then, taking into account~\eqref{eq59}, we have
$$
u(y^0)-\int\limits_\oG u(\eta)\mu_i(y^0,d\eta)\ge \int\limits_G
[u(y^0)-u(\eta)]\mu_i(y^0,d\eta)>0,
$$
which contradicts~\eqref{eq57}. Therefore,
$\spt\mu_i(y^0,\cdot)\subset\pG$. It follows from this relation,
from~\eqref{eq61}, and from Condition~\ref{cond2.3} (part 1) that
\begin{equation}\label{eq65}
b_{is}(y^0)=0,\qquad
\spt\alpha_i(y^0,\cdot)\subset\pG\setminus\cO_{\varkappa_1}(\cK),\qquad
\spt\beta_i(y^0,\cdot)\subset\pG.
\end{equation}

2. Suppose that
$\alpha_i(y^0,\pG\setminus\cO_{\varkappa_1}(\cK))=0$. In this
case, due to~\eqref{eq65},
\begin{equation}\label{eq67}
\alpha_i(y^0,\oG)=0.
\end{equation}
Now it follows from~\eqref{eq61},~\eqref{eq65},~\eqref{eq67} and
from Condition~\ref{cond2.4} that
$$
\mu_i(y^0,\cdot)=\beta_i(y^0,\cdot),\qquad
\spt\beta_i(y^0,\cdot)\subset\pG,\qquad \beta_i(y^0,\cM)<1.
$$
Hence, the following inequalities hold for $u\in C_B(\oG)\subset
C_\cN(\oG)$:
$$
u(y^0)-\int\limits_\oG u(\eta)\mu_i(y^0,d\eta)=
u(y^0)-\int\limits_\cM u(\eta)\beta_i(y^0,d\eta) \ge
u(y^0)-u(y^0)\beta_i(y^0,\cM)>0,
$$
which contradicts~\eqref{eq57}.

This contradiction shows that
$\alpha_i(y^0,\pG\setminus\cO_{\varkappa_1}(\cK))>0$. Therefore,
taking into account  Condition~\ref{cond2.3} (part 2), we have
$y^0\in\cO_{\varkappa_2}(\cK)$.

3. We claim that there is a point
\begin{equation}\label{eq66}
y'\in\pG\setminus\cO_{\varkappa_1}(\cK)
\end{equation}
such that $u(y')=u(y^0)$. Indeed, assume the contrary:
$u(y^0)>u(y)$ for $y\in\pG\setminus\cO_{\varkappa_1}(\cK)$. Then,
using~\eqref{eq59},~\eqref{eq61}, and~\eqref{eq65}, we obtain
\begin{equation}\label{eq66'}
u(y^0)-\int\limits_\oG u(\eta)\mu_i(y^0,d\eta)\ge \int\limits_\oG
[u(y^0)-u(\eta)]\mu_i(y^0,d\eta)\ge
\int\limits_{\pG\setminus\cO_{\varkappa_1}(\cK)}
[u(y^0)-u(\eta)]\alpha_i(y^0,d\eta)>0
\end{equation}
because $\alpha_i(y^0,\pG\setminus\cO_{\varkappa_1}(\cK))>0$.
Inequality~\eqref{eq66'} contradicts~\eqref{eq57}. Therefore, the
function $u$ achieves its positive maximum at some point
$y'\in\pG\setminus\cO_{\varkappa_1}(\cK)$. Repeating the arguments
of items 1 and 2 of this proof yields
$y'\in\cO_{\varkappa_2}(\cK)$, which contradicts~\eqref{eq66}.

Thus, we have proved that there is a point $y^1\in G$ such that
$u(y^1)=u(y^0)$. Applying Lemma~\ref{l2.1}, we obtain
$P_0u(y^1)+P_1u(y^1)\le 0$.
\end{proof}

\begin{corollary}\label{cor2.1}
Let Conditions $\ref{cond1.1}$--$\ref{cond1.2}$ and
$\ref{cond2.1'}$--$\ref{cond2.4}$ hold. Let $u\in C_B(\oG)$ be a
solution of the equation $$qu(y)-P_0u(y)-P_1u(y)=f_0(y),\quad y\in
G,$$ where $q>0$ and $f_0\in C(\oG)$. Then
\begin{equation}\label{eqcor2.1}
\|u\|_{C(\oG)}\le\dfrac{1}{q}\|f_0\|_{C(\oG)}.
\end{equation}
\end{corollary}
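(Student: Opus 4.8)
The plan is to derive the a priori estimate from Lemma~\ref{l2.3} by a standard maximum-principle argument applied to both $u$ and $-u$. First I would note that it suffices to prove $\|u\|_{C(\oG)}\le q^{-1}\|f_0\|_{C(\oG)}$; if $u\equiv 0$ there is nothing to prove, so assume $u$ is nontrivial. Since $u\in C_B(\oG)\subset C_\cN(\oG)\subset C_\cK(\oG)$ and $\oG$ is compact, $u$ attains both its maximum and its minimum on $\oG$. If $\max_{\oG}u\le 0$ and $\min_{\oG}u\ge 0$ we are done trivially, so at least one of the functions $u$, $-u$ achieves a positive maximum on $\oG$; by symmetry (the hypotheses on $P_0$, $P_1$, and the space $C_B(\oG)$ are invariant under $u\mapsto -u$, and $-u$ solves the same equation with $f_0$ replaced by $-f_0$) it is enough to bound $\max_{\oG}u$ when it is positive.

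So suppose $M:=\max_{\oG}u=u(y^0)>0$ for some $y^0\in\oG$. Since $u\in C_B(\oG)\subset W^2_{2,\loc}(G)$ and satisfies $qu-P_0u-P_1u=f_0$ in $G$ with $f_0\in C(\oG)$, we have $P_0u = qu - P_1u - f_0\in C(G)$, so the hypothesis "$P_0u\in C(G)$" of Lemma~\ref{l2.3} is met. Lemma~\ref{l2.3} then produces a point $y^1\in G$ with $u(y^1)=u(y^0)=M$ and $P_0u(y^1)+P_1u(y^1)\le 0$. Evaluating the equation at $y^1$ gives
\begin{equation*}
qM = qu(y^1) = P_0u(y^1)+P_1u(y^1)+f_0(y^1)\le f_0(y^1)\le \|f_0\|_{C(\oG)},
\end{equation*}
whence $M\le q^{-1}\|f_0\|_{C(\oG)}$. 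Applying the same reasoning to $-u$ (which lies in $C_B(\oG)$ and solves the equation with right-hand side $-f_0$) yields $\max_{\oG}(-u)\le q^{-1}\|{-}f_0\|_{C(\oG)}=q^{-1}\|f_0\|_{C(\oG)}$ whenever that maximum is positive. Combining the two bounds gives $\|u\|_{C(\oG)}=\max(\max_{\oG}u,\max_{\oG}(-u))\le q^{-1}\|f_0\|_{C(\oG)}$, which is~\eqref{eqcor2.1}.

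The only real subtlety — and the step I would be most careful about — is checking that the hypotheses of Lemma~\ref{l2.3} genuinely apply: namely that $P_0u\in C(G)$ (which follows from the equation and the regularity $u\in W^2_{2,\loc}(G)$ built into $C_B(\oG)$ through \eqref{eqBNK} and the standing assumptions, since $P_1u\in C(\oG)$ by Condition~\ref{cond2.1'} and $f_0\in C(\oG)$), and that the positive maximum is attained on the \emph{closed} set $\oG$ so that Lemma~\ref{l2.3} is invoked with a genuine maximizer $y^0\in\oG$. Everything else is bookkeeping: the symmetry under $u\mapsto -u$ is immediate because Condition~\ref{cond1.1}, Condition~\ref{cond2.1'}, the nonlocal conditions~\eqref{eq56}, and the sign structure are all linear/homogeneous, and the passage from the pointwise inequality at $y^1$ to the norm estimate is a one-line computation as above. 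No compactness or fixed-point machinery is needed here; the corollary is purely a consequence of Lemma~\ref{l2.3}.
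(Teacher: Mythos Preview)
Your proof is correct and follows exactly the paper's approach: apply Lemma~\ref{l2.3} at a point where the positive maximum is attained, evaluate the equation at the resulting interior point $y^1$, and use the symmetry $u\mapsto -u$ (which the paper handles implicitly by writing $\max_{\oG}|u|=u(y^0)>0$). One minor inaccuracy: $C_B(\oG)$ is \emph{not} by definition contained in $W^2_{2,\loc}(G)$, but this is harmless since your actual justification of $P_0u\in C(G)$ --- namely $P_0u=qu-P_1u-f_0$ with each term continuous --- is the right one and does not use any Sobolev regularity.
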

\begin{proof}
Let $\max\limits_{y\in\oG}|u(y)|=u(y^0)>0$ for some $y^0\in \oG$.
In this case, by Lemma~\ref{l2.3}, there is a point $y^1\in G$
such that $u(y^1)=u(y^0)$ and $P_0u(y^1)+P_1u(y^1)\le 0$.
Therefore,
$$
\|u\|_{C(\oG)}=u(y^0)=u(y^1)=\dfrac{1}{q}(P_0u(y^1)+P_1u(y^1)+f_0(y^1))\le
\dfrac{1}{q}\|f_0\|_{C(\oG)}.
$$
\end{proof}

\section{Reduction to the Operator Equation on the Boundary}\label{subsectBoundedReduction}

In this section, we impose some additional restrictions on the
nonlocal operators, which allow us to reduce nonlocal elliptic
problems to operator equations on the boundary.

Note that if $u\in C_\cN(\oG)$, then $\bB_i  u$ is continuous on
$\Gamma_i$ and can be extended to a continuous function on
$\overline{\Gamma_i}$ (also denoted by $\bB_i  u$), which belongs
to  $C_\cN(\overline{\Gamma_i})$. We assume that the operators
$\bB_{\alpha i}$ and $\bB_{\beta i}$ possess the similar property.

\begin{condition}\label{cond2.5}
For any function $u\in C_\cN(\overline G)$, the functions
$\bB_{\alpha i}u$ and $\bB_{\beta i}u$ can be extended to
$\overline{\Gamma_i}$ in such a way that the extended functions
{\rm(}which we also denote by $\bB_{\alpha i}u$ and $\bB_{\beta
i}u$, respectively{\rm)} belong to $C_\cN(\overline{\Gamma_i})$.
\end{condition}

The next lemma directly follows from the definition of the
nonlocal operators.

\begin{lemma}\label{l2.2}
Let Conditions $\ref{condK1}$, $\ref{cond1.2}$, $\ref{cond2.1''}$,
$\ref{cond2.2}$, and $\ref{cond2.5}$ hold. Then the operators
$\bB_i , \bB_{\alpha i},\bB_{\beta i}: C_\cN(\oG)\to
C_\cN(\overline{\Gamma_i}) $ are bounded and
$$
\|\bB_i
u\|_{C_\cN(\overline{\Gamma_i})}\le\|u\|_{C_\cN(\oG)},\qquad
\|\bB_{\alpha
i}u\|_{C_\cN(\overline{\Gamma_i})}\le\|u\|_{C_\cN(\oG\setminus\cO_{\varkappa_1}(\cK))},\qquad
\|\bB_{\beta
i}u\|_{C_\cN(\overline{\Gamma_i})}\le\|u\|_{C_\cN(\oG)},
$$
$$
\|\bB_{\alpha i}u+\bB_{\beta i}u\|\le \|u\|_{C_\cN(\oG)},\qquad
\|\bB_i u+\bB_{\alpha i}u+\bB_{\beta i}u\|\le \|u\|_{C_\cN(\oG)}.
$$
\end{lemma}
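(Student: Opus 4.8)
The plan is to verify first that the three operators do map $C_\cN(\oG)$ into $C_\cN(\overline{\Gamma_i})$, and then to obtain the five norm inequalities; in each case it suffices to read off the defining formulas and use the total‑mass bound $\mu_i(y,\oG)\le1$ from \eqref{eq59}.

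For the mapping property, take $\bB_i$ first. On $\Gamma_i$ the function $\bB_i u=\sum_s b_{is}\,(u\circ\Omega_{is})$ is continuous, since $b_{is}\in C^\infty$, $\Omega_{is}$ is a $C^\infty$ diffeomorphism, and no jump occurs where $\bB_i u$ switches to $0$ (because $\supp b_{is}\subset\cO_\varepsilon(\cK)$). At an endpoint $g\in\overline{\Gamma_i}\cap\cK$ one has $\Omega_{is}(y)\to\Omega_{is}(g)\in\cK$ as $\Gamma_i\ni y\to g$ (this is where Condition~\ref{condK1} enters, fixing the local structure of $\Omega_{is}$ near $\cK$), and since $u\in C_\cN(\oG)$ with $\cK\subset\cN$ (Condition~\ref{cond2.1''}), it follows that $u(\Omega_{is}(y))\to0$; hence $\bB_i u$ extends continuously to $\overline{\Gamma_i}$ with value $0$ on $\overline{\Gamma_i}\cap\cK$. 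Finally, if $y\in\Gamma_i\cap\cN$ then $\mu(y,\oG)=0$, so $\mu_i(y,\cdot)=\mu(y,\cdot)/b_0(y)=0$ ($b_0>0$ by Condition~\ref{cond2.2}); by \eqref{eq61} each of the nonnegative measures $\delta_{is}(y,\cdot)$ vanishes, so all $b_{is}(y)=0$ (recall $\delta_{is}(y,\oG)=b_{is}(y)$), and therefore $\bB_i u(y)=0$. Thus $\bB_i u\in C_\cN(\overline{\Gamma_i})$. For $\bB_{\alpha i}$ and $\bB_{\beta i}$ this mapping property is precisely Condition~\ref{cond2.5}.

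The bounds rest on two observations. First, since $\Omega_{is}(\Gamma_i\cap\cO_\varepsilon(\cK))\subset G\subset\oG$ and $b_{is}\equiv0$ outside $\cO_\varepsilon(\cK)$, the measure $\delta_{is}(y,\cdot)$ has total mass $\delta_{is}(y,\oG)=b_{is}(y)$ and is concentrated at the single point $\Omega_{is}(y)$, so $\int_\oG u\,d\delta_{is}(y,\cdot)=b_{is}(y)\,u(\Omega_{is}(y))$; summing over $s$ and using \eqref{eq61},
$$
\bB_i u(y)+\bB_{\alpha i}u(y)+\bB_{\beta i}u(y)=\int_\oG u(\eta)\,\mu_i(y,d\eta),\qquad y\in\Gamma_i.
$$
Second, by \eqref{eq61}, \eqref{eq59} and $\delta_{is}(y,\oG)=b_{is}(y)$, the nonnegative numbers $\sum_s b_{is}(y)$, $\alpha_i(y,\oG)$, $\beta_i(y,\oG)$ have sum $\mu_i(y,\oG)\le1$; together with Condition~\ref{cond1.2} (part~1) this yields $\sum_s b_{is}(y)\le1$, $\alpha_i(y,\oG)\le1$, $\beta_i(y,\oG)\le1$, and $\alpha_i(y,\oG)+\beta_i(y,\oG)\le1$. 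Bounding each integrand by $\|u\|_{C(\oG)}=\|u\|_{C_\cN(\oG)}$ and pulling out the total mass then gives the estimates for $\bB_i u$, for $\bB_{\beta i}u$, for $\bB_{\alpha i}u+\bB_{\beta i}u$, and for $\bB_i u+\bB_{\alpha i}u+\bB_{\beta i}u$ at once (the last one via the displayed identity); for $\bB_{\alpha i}u$ alone one argues the same way but additionally uses that $\spt\alpha_i(y,\cdot)\subset\oG\setminus\cO_{\varkappa_1}(\cK)$ by Condition~\ref{cond2.3} (part~1), so that only the values of $u$ on $\oG\setminus\cO_{\varkappa_1}(\cK)$ enter the integral. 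All suprema may be taken over $\Gamma_i$, which is dense in $\overline{\Gamma_i}$.

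I do not expect a genuine obstacle: as the text remarks, the lemma is essentially a restatement of the definitions. The one step that requires a little care beyond formula‑chasing is the continuous extension of $\bB_i u$ to the conjugation points with value $0$, together with the fact that $\bB_i u$ vanishes on all of $\Gamma_i\cap\cN$ and not merely on $\cK$; both follow from $\supp b_{is}\subset\cO_\varepsilon(\cK)$, Condition~\ref{condK1}, $\cK\subset\cN$, and $b_0>0$.
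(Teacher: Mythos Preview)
Your proof is correct and is precisely the expanded version of what the paper intends: the paper gives no proof at all beyond the sentence ``The next lemma directly follows from the definition of the nonlocal operators,'' and your argument spells out exactly that verification (mapping property via Condition~\ref{cond2.5} and the structure of $\bB_i$ near $\cK$, norm bounds via the total-mass inequality $\mu_i(y,\oG)\le1$). One small remark: for the estimate $\|\bB_{\alpha i}u\|\le\|u\|_{C_\cN(\oG\setminus\cO_{\varkappa_1}(\cK))}$ you invoke Condition~\ref{cond2.3} (part~1), which is indeed needed but is not listed among the hypotheses of the lemma as stated---this is a harmless omission in the paper's formulation rather than a gap in your argument.
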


Consider the space of vector-valued functions $
\cC_\cN(\pG)=\prod\limits_{i=1}^N C_\cN(\overline{\Gamma_i}) $
with the norm $ \|\psi\|_{\cC_\cN(\pG)}=
\max\limits_{i=1,\dots,N}\max\limits_{y\in\overline{\Gamma_i}}\|\psi_i\|_{C(\overline{\Gamma_i})}$,
$ \psi=\{\psi_i\},\ \psi_i\in C_\cN(\overline{\Gamma_i}) $.

Introduce the operators
\begin{equation}\label{eqBAlphaBeta}
\bB=\{\bB_i \}:C_\cN(\oG)\to\cC_\cN(\pG),\qquad
\bB_{\alpha\beta}=\{\bB_{\alpha i}+\bB_{\beta
i}\}:C_\cN(\oG)\to\cC_\cN(\pG).
\end{equation}

Using the operator $\bS_q$ defined in Sec. \ref{subsectStatement},
we introduce the bounded operator
\begin{equation}\label{eq68}
\bI-\bB_{\alpha\beta}\bS_q:\cC_\cN(\pG)\to\cC_\cN(\pG),\qquad q\ge
q_1.
\end{equation}
Since $\bS_q\psi\in C_\cN(\oG)$ for $\psi\in\cC_\cN(\pG)$, the
operator in~\eqref{eq68} is well defined.

Now we formulate sufficient conditions under which the bounded
operator $(\bI-\bB_{\alpha\beta}\bS_q)^{-1}:\cC_\cN(\pG)\to
\cC_\cN(\pG)$ exists.

We represent the measures $\beta_i(y,\cdot)$ in the form
\begin{equation}\label{eq73}
\beta_i(y,\cdot)=\beta_i^1(y,\cdot)+\beta_i^2(y,\cdot),
\end{equation}
where $\beta_i^1(y,\cdot)$ and $\beta_i^2(y,\cdot)$ are
nonnegative Borel measures. Let us specify them. For each $p>0$,
we consider the covering of the set $\overline{\cM}$ by the
$p$-neighborhoods of all its points. Denote some finite
subcovering by $\cM_p$. Since $\cM_p$ is a finite union of open
disks, it is an open Borel set. Now for each $p>0$, we consider a
cut-off function $\hat\zeta_p\in C^\infty(\bbR^2)$ such that
$0\le\hat\zeta_p(y)\le 1$, $\hat\zeta_p(y)=1$ for $y\in\cM_{p/2}$,
and $\hat\zeta_p(y)=0$ for $y\notin\cM_{p}$. Set
$\tilde\zeta_p=1-\hat\zeta_p$. Introduce the operators
$$
\hat\bB_{\beta i}^1
u(y)=\int\limits_{\oG}\hat\zeta_p(\eta)u(\eta)\beta_i^1(y,d\eta),\quad
 \tilde\bB_{\beta i}^1 u(y)=\int\limits_{\oG}\tilde\zeta_p(\eta)
u(\eta)\beta_i^1(y,d\eta),\quad  \bB_{\beta i}^2
u(y)=\int\limits_{\oG}u(\eta)\beta_i^2(y,d\eta).
$$

\begin{condition}\label{cond2.7}
The following assertions are true for $i=1,\dots,N${\rm:}
\begin{enumerate}
\item
the operators  $\hat\bB_{\beta i}^1,\tilde\bB_{\beta
i}^1:C_\cN(\oG)\to C_\cN(\overline{\Gamma_i})$ are bounded{\rm;}
\item there exists a number $p>0$ such that\footnote{Part 2 of Condition~\ref{cond2.7} may be replaced by the stronger assumption
$\|\hat\bB_{\beta i}^1\|\to 0$ as $p\to0$, which is easier to
verify in applications.}
$$
\|\hat\bB_{\beta i}^1\|< \left\{
\begin{aligned}
&\frac{1}{c_1} & &\text{if}\quad \alpha_j(y,\oG)=0\ \forall
y\in\Gamma_j,\
j=1,\dots,N,\\
&\frac{1}{c_1(1+c_1)} & &\text{otherwise},
\end{aligned}
\right.
$$
where $c_1$ is the constant occurring in Theorem {\rm\ref{th1-2}.}
\end{enumerate}
\end{condition}

\begin{remark}
The operators  $\hat\bB_{\beta i}^1,\tilde\bB_{\beta
i}^1:C_\cN(\oG)\to C_\cN(\overline{\Gamma_i})$ are bounded if and
only if the operator $\hat\bB_{\beta i}^1+\tilde\bB_{\beta
i}^1:C_\cN(\oG)\to C_\cN(\overline{\Gamma_i})$ is bounded. This
follows from the relations $\hat\bB_{\beta i}^1u=(\hat\bB_{\beta
i}^1+\tilde\bB_{\beta i}^1)(\hat\zeta_p u)$ and $\tilde\bB_{\beta
i}^1u=(\hat\bB_{\beta i}^1+\tilde\bB_{\beta i}^1)(\tilde\zeta_p
u)$ and from the continuity of the functions $\hat\zeta_p$ and
$\tilde\zeta_p$.
\end{remark}

\begin{condition}\label{cond2.8}
The operators $\bB_{\beta i}^2:C_\cN(\oG)\to
C_\cN(\overline{\Gamma_i})$, $i=1,\dots,N$, are compact.
\end{condition}

It follows from~\eqref{eq61} and~\eqref{eq73} that the measures
$\mu_i(y,\cdot)$ have the following representation:
\begin{equation*}
\mu_i(y,\cdot)=\sum\limits_{s=1}^{S_i}\delta_{is}(y,\cdot)+\alpha_i(y,\cdot)+
\beta_i^1(y,\cdot)+\beta_i^2(y,\cdot),\qquad y\in\Gamma_i.
\end{equation*}
The measures $\delta_{is}(y,\cdot)$ correspond to nonlocal terms
supported near the set $\cK$ of the conjugation points. The
measures $\alpha_i(y,\cdot)$ correspond to nonlocal terms
supported outside the set $\cK$. The measures $\beta_i^1(y,\cdot)$
and $\beta_i^2(y,\cdot)$ correspond to nonlocal terms with
arbitrary geometrical structure of their support (in particular,
their support may intersect with the set $\cK$); however, the
measure $\beta_i^1(y,\cM_p)$ of the set $\cM_p$ must be small for
small $p$ (Condition~\ref{cond2.7}) and the measure
$\beta_i^2(y,\cdot)$ must generate  a compact operator
(Condition~\ref{cond2.8}).

\begin{lemma}\label{l2.Boundary}
Let Conditions $\ref{cond1.1}$--$\ref{cond1.2}$,
$\ref{cond2.1'}$--$\ref{cond2.4}$, and
$\ref{cond2.5}$--$\ref{cond2.8}$ hold. Then there exists a bounded
operator $(\bI-\bB_{\alpha\beta}\bS_q)^{-1}:\cC_\cN(\pG)\to
\cC_\cN(\pG)$, $q\ge q_1$, where $q_1>0$ is sufficiently large.
\end{lemma}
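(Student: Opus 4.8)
The plan is to show that, for $q\ge q_1$ sufficiently large, $\bI-\bB_{\alpha\beta}\bS_q$ is an injective Fredholm operator of index zero on $\cC_\cN(\pG)$; bounded invertibility then follows from the Fredholm alternative together with the open mapping theorem.

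The injectivity is, in essence, Corollary~\ref{cor2.1} transplanted to the boundary. Suppose $(\bI-\bB_{\alpha\beta}\bS_q)\psi=0$ and put $u=\bS_q\psi$. By Theorem~\ref{th1-2} with $f_0=0$, $u\in C_\cK(\oG)\cap C^\infty(G)$, $(\bS_q\psi)|_\cN=0$, $P_0u-qu=0$ in $G$, and $u|_{\Gamma_i}-\bB_iu=\psi_i$; the equation $\psi=\bB_{\alpha\beta}\bS_q\psi$ says $\psi_i=\bB_{\alpha i}u+\bB_{\beta i}u$, so $u$ satisfies the nonlocal conditions~\eqref{eq63}, i.e.\ $u\in C_B(\oG)$. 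If $u\not\equiv0$, normalize so that $\max_{\oG}|u|=u(y^0)>0$; Lemma~\ref{l2.3} with $P_1$ replaced by the zero operator (which satisfies Condition~\ref{cond2.1'}) gives a point $y^1\in G$ with $u(y^1)=u(y^0)$ and $P_0u(y^1)\le0$, which is impossible since $P_0u(y^1)=qu(y^1)=qu(y^0)>0$. Hence $u\equiv0$ and $\psi=u|_\Gamma-\bB u=0$.

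For the Fredholm structure I would use~\eqref{eq73} and $\hat\zeta_p+\tilde\zeta_p\equiv1$ to write
$$
\bB_{\alpha\beta}\bS_q=\cR_q+\cE_q+\cF_q,\qquad
\cR_q=\{\hat\bB_{\beta i}^1\bS_q\},\quad \cE_q=\{(\bB_{\alpha i}+\tilde\bB_{\beta i}^1)\bS_q\},\quad \cF_q=\{\bB_{\beta i}^2\bS_q\}.
$$
Since $\|\bS_q\|\le c_1$ uniformly in $q\ge q_1$ (Theorem~\ref{th1-2}) and the $\hat\bB_{\beta i}^1$ are bounded, $\|\cR_q\|\le c_1\max_i\|\hat\bB_{\beta i}^1\|$, which by Condition~\ref{cond2.7}(2) is $<1$ when all $\alpha_j\equiv0$ and $<(1+c_1)^{-1}$ otherwise; by Condition~\ref{cond2.8}, $\cF_q$ is compact. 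The remaining term $\cE_q$ I would split as $\cE_q'+\cE_q''$, a ``small'' plus a ``compact'' part, by decomposing each $\psi$ into a piece $\chi\psi$ supported near $\cK$ and a piece $(1-\chi)\psi$ supported away from $\cK$. On $\chi\psi$ the supports of the measures $\alpha_i(y,\cdot)$ and $\tilde\zeta_p\beta_i^1(y,\cdot)$ are kept away from $\cK$ by Condition~\ref{cond2.3}(1) and by the construction of $\tilde\zeta_p$ (together with $(\bS_q\psi)|_\cN=0$, which confines $\tilde\zeta_p\bS_q\psi$ to the interior of $G$), and away from the support of $\chi\psi$ as well, so Lemmas~\ref{l4}--\ref{l5} bound this contribution by $O(1/q)$; the part of $\cE_q$ that ``sees'' $\bS_q\psi$ in a thin interior collar of $\pG$ is, in addition, small because $\bS_q\psi$ vanishes on $\cN$ (boundary barrier estimates for $P_0-q$). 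These small pieces form $\cE_q'$. On $(1-\chi)\psi$ everything is bounded away from $\cK$, $\spt\alpha_i(y,\cdot)\subset\overline{G_\sigma}$ by Condition~\ref{cond2.3}(2), and interior elliptic estimates for $\bS_q\psi$ combined with the Arzel\`a--Ascoli theorem make the corresponding operator $\cE_q''$ compact. Taking $q_1$ large, $\|\cR_q+\cE_q'\|<1$ for all $q\ge q_1$; the case distinction in Condition~\ref{cond2.7} (the factor $1+c_1$) is exactly what lets $\cE_q'$ be absorbed when $\alpha$ is not identically zero.

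Finally, writing $\bI-\bB_{\alpha\beta}\bS_q=(\bI-\cR_q-\cE_q')-(\cE_q''+\cF_q)$, the first operator is boundedly invertible by a Neumann series and $(\bI-\cR_q-\cE_q')^{-1}(\cE_q''+\cF_q)$ is compact, so
$$
\bI-\bB_{\alpha\beta}\bS_q=(\bI-\cR_q-\cE_q')\bigl(\bI-(\bI-\cR_q-\cE_q')^{-1}(\cE_q''+\cF_q)\bigr)
$$
is Fredholm of index zero; being injective, it is bijective, and the inverse is bounded. The step I expect to be the main obstacle is the decomposition of $\cE_q$: one has to coordinate the radii $\varepsilon$, $\varkappa_1$, $\varkappa_2$, $\sigma$ with the cut-off parameter $p$ and, above all, establish compactness of the far part $\cE_q''$ even though the measures $\alpha_i(y,\cdot)$ and $\beta_i^1(y,\cdot)$ are not assumed to depend continuously on $y$; here the interior-collar support restriction in Condition~\ref{cond2.3}(2) and the vanishing of $\bS_q\psi$ on $\cN$ are what make the argument work.
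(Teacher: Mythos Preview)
Your injectivity argument and the handling of $\cF_q$ and $\cR_q$ are fine and match the paper. The gap is in your treatment of $\cE_q=\{(\bB_{\alpha i}+\tilde\bB_{\beta i}^1)\bS_q\}$, specifically the $\bB_\alpha$ part of the piece you call $\cE_q''$. Your cutoff $\chi$ acts on the \emph{input} $\psi$, whereas Condition~\ref{cond2.3}(2) constrains $\spt\alpha_i(y,\cdot)$ in terms of the \emph{output} variable $y$: the inclusion $\spt\alpha_i(y,\cdot)\subset\overline{G_\sigma}$ holds only for $y\in\Gamma_i\setminus\cO_{\varkappa_2}(\cK)$. For $y\in\Gamma_i\cap\cO_{\varkappa_2}(\cK)$ only Condition~\ref{cond2.3}(1) applies, so $\alpha_i(y,\cdot)$ may be supported on the boundary piece $\pG\setminus\cO_{\varkappa_1}(\cK)$. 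There (taking $\bB_i=0$ for a model case and $\varkappa_1>\varepsilon$) one has $\bS_q((1-\chi)\psi)|_{\pG}=(1-\chi)\psi$, so that part of $\cE_q''$ is a composition-type operator $\psi\mapsto b(y)\psi_j(\Omega(y))$, which is \emph{not} compact; interior elliptic estimates and the vanishing of $\bS_q\psi$ on $\cN$ are irrelevant because $\alpha_i(y,\cdot)$ may charge $\cM$. Your Fredholm decomposition therefore fails. A tell-tale sign: if your scheme worked, the bound $1/c_1$ in Condition~\ref{cond2.7} would already suffice and the sharper constant $1/(c_1(1+c_1))$ would never be needed.

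The paper does not attempt to make any part of $\bB_\alpha\bS_q$ compact. It first inverts $\bI-\bB_\alpha\bS_q$ directly: with $\zeta$ an interior cutoff, $\bB_\alpha\zeta\bS_q$ is $O(1/q)$ by Lemma~\ref{l4}, while for $T:=\bB_\alpha(1-\zeta)\bS_q$ the crucial observation is that $T\psi$ is supported in $\pG\cap\overline{\cO_{\varkappa_2}(\cK)}$ (this is exactly Condition~\ref{cond2.3}(2)), so that \emph{your own} correct estimate for the near-$\cK$ input applies to the second factor and gives $\|T^2\|=O(1/q)$ via Lemma~\ref{l5}. Hence $(\bI-T)^{-1}=(\bI+T)(\bI-T^2)^{-1}$ exists with norm $1+c_1+O(1/q)$, and consequently $\|(\bI-\bB_\alpha\bS_q)^{-1}\|=1+c_1+O(1/q)$. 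This is precisely why Condition~\ref{cond2.7} requires $\|\hat\bB_{\beta i}^1\|<1/(c_1(1+c_1))$ when $\alpha\not\equiv0$: combined with $\|\tilde\bB_\beta^1\bS_q\|=O(1/q)$ (Lemma~\ref{l4} again), it makes $(\bI-\bB_\alpha\bS_q)^{-1}(\hat\bB_\beta^1+\tilde\bB_\beta^1)\bS_q$ a strict contraction. Only after this two-stage Neumann-series inversion is the compact perturbation $\bB_\beta^2\bS_q$ added and the injectivity invoked.
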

\begin{proof}
1. Consider the bounded operators $
\hat\bB_{\beta}^1=\{\hat\bB_{\beta i}^1\}$,
$\tilde\bB_{\beta}^1=\{\tilde\bB_{\beta i}^1\}$,
$\bB_{\beta}^2=\{\bB_{\beta i}^2\}$, and
$\bB_{\alpha}=\{\bB_{\alpha i}\} $ acting from $C_\cN(\oG)$ to
$\cC_\cN(\pG)$ (cf.~\eqref{eqBAlphaBeta}).

Let us prove that the operator
$\bI-\bB_{\alpha}\bS_q:\cC_\cN(\pG)\to \cC_\cN(\pG)$ has the
bounded inverse.

 Introduce a function $\zeta\in C^\infty(\oG)$ such that
$0\le\zeta(y)\le 1$, $\zeta(y)=1$ for $y\in\overline{G_\sigma}$,
and $\zeta(y)=0$ for $y\notin G_{\sigma/2}$, where $\sigma>0$ is
the number from Condition~\ref{cond2.3}.

We have
\begin{equation}\label{eq75}
\bI-\bB_{\alpha}\bS_q=\bI-\bB_{\alpha}(1-\zeta)\bS_q-\bB_{\alpha}\zeta\bS_q.
\end{equation}

1a. First, we show that the operator
$\bI-\bB_{\alpha}(1-\zeta)\bS_q$ has the bounded inverse. By Lemma
\ref{l2.2} and Theorem~\ref{th1-2},
\begin{equation}\label{eq76}
\|\bB_{\alpha}(1-\zeta)\bS_q\|\le c_1.
\end{equation}
Furthermore, $(1-\zeta)\bS_q\psi=0$ in $\overline{G_\sigma}$ for
any  $\psi\in \cC_\cN(\pG)$. Therefore, by Condition
\ref{cond2.3},
\begin{equation}\label{eq77}
\supp\bB_\alpha(1-\zeta)\bS_q\psi\subset\pG\cap\overline{\cO_{\varkappa_2}(\cK)}.
\end{equation}

Let us show that
\begin{equation}\label{eq78}
\|[\bB_{\alpha}(1-\zeta)\bS_q]^2\|\le \frac{c}{q},\qquad q\ge q_1,
\end{equation}
where $q_1>0$ is sufficiently large and $c>0$ does not depend on
$q$. Consecutively applying (I) Lemma~\ref{l2.2}, (II) Lemma
\ref{l5} and relation~\eqref{eq77}, and (III) Lemma~\ref{l2.2} and
Theorem~\ref{th1-2}, we obtain
\begin{align*}
\|\bB_{\alpha}(1-\zeta)\bS_q\,\bB_{\alpha}(1-\zeta)\bS_q\psi\|_{\cC_\cN(\pG)}\le
&
\|\bS_q\bB_{\alpha}(1-\zeta)\bS_q\psi\|_{C_\cN(\oG\setminus\cO_{\varkappa_1}(\cK))}\le\\
&\dfrac{c_3}{q}\|\bB_{\alpha}(1-\zeta)\bS_q\psi\|_{C_\cN(\pG\cap\overline{\cO_{\varkappa_2}(\cK)})}\le
\dfrac{c_3c_1}{q}\|\psi\|_{\cC_\cN(\pG)}.
\end{align*}
This yields~\eqref{eq78} with $c=c_3c_1$.

If $q\ge 2c $, then the operator
$\bI-[\bB_{\alpha}(1-\zeta)\bS_q]^2$ has the bounded inverse.
Therefore, the operator $\bI-\bB_{\alpha}(1-\zeta)\bS_q$ also has
the bounded inverse and
\begin{equation}\label{eq79}
[\bI-\bB_{\alpha}(1-\zeta)\bS_q]^{-1}=[\bI+\bB_{\alpha}(1-\zeta)\bS_q]
[\bI-(\bB_{\alpha}(1-\zeta)\bS_q)^2]^{-1}.
\end{equation}
Representation~\eqref{eq79}, Lemma~\ref{l2.2}, Theorem~\ref{th1-2}
and relations~\eqref{eq76} and~\eqref{eq78} imply that
\begin{equation}\label{eq80}
\|[\bI-\bB_{\alpha}(1-\zeta)\bS_q]^{-1}\|=1+c_1+O(q^{-1}),\qquad
q\to+\infty.
\end{equation}

1b. Now we estimate the norm of the operator
$\bB_{\alpha}\zeta\bS_q$. Lemmas~\ref{l2.2} and~\ref{l5} imply
that
\begin{equation}\label{eq81}
\|\bB_{\alpha}\zeta\bS_q\psi\|_{\cC_\cN(\pG)}\le
\|\bS_q\psi\|_{C(\overline{G_{\sigma/2}})}\le
\dfrac{c_2}{q}\|\psi\|_{\cC_\cN(\pG)}.
\end{equation}
Therefore, using representation~\eqref{eq75}, we see that the
operator $\bI-\bB_{\alpha}\bS_q$ has the bounded inverse for
sufficiently large $q$ and
\begin{equation}\label{eq82}
(\bI-\bB_{\alpha}\bS_q)^{-1}=[\bI-(\bI-\bB_{\alpha}(1-\zeta)\bS_q)^{-1}\bB_\alpha\zeta
\bS_q]^{-1} [\bI-\bB_{\alpha}(1-\zeta)\bS_q]^{-1}.
\end{equation}
It follows from~\eqref{eq80}--\eqref{eq82} that
\begin{equation}\label{eq83}
\|(\bI-\bB_{\alpha}\bS_q)^{-1}\|=1+c_1+O(q^{-1}),\qquad
q\to+\infty.
\end{equation}

2. Let us prove that the operator
$\bI-(\bB_{\alpha}+\hat\bB_{\beta}^1+\tilde\bB_{\beta}^1)\bS_q:\cC_\cN(\pG)\to
\cC_\cN(\pG)$ has the bounded inverse.

2a. It follows from the definition of the operator
$\tilde\bB_{\beta}^1$ and from Lemma~\ref{l4} (with
$Q_1=\overline\cM$ and $Q_2=\oG\setminus\cM_{p/2}$) that
\begin{equation}\label{eq84}
\|\tilde\bB_{\beta
i}^1\bS_q\psi\|_{C_\cN(\overline{\Gamma_i})}\le\|\bS_q\psi\|_{C(\oG\setminus\cM_{p/2})}
\le\dfrac{c_2}{q}\|\psi\|_{\cC_\cN(\pG)}
\end{equation}
because $(\oG\setminus\cM_{p/2})\cap\overline\cM=\varnothing$ and
$\supp(\bS_q\psi)|_{\pG}\subset\overline\cM$ for
$\psi\in\cC_\cN(\pG)$.

2b. Let $\alpha_j(y,\oG)\ne 0$ for some $j$ and $y\in\Gamma_j$.
Due to Condition~\ref{cond2.7} (part 2) and Theorem~\ref{th1-2},
there is a number $d$ such that $0<2d<1/(1+c_1)$ and
\begin{equation}\label{eq85}
\|\hat\bB_{\beta i}^1\bS_q\psi\|_{C_\cN(\overline{\Gamma_i})}\le
\Bigg(\dfrac{1}{c_1(1+c_1)}-\dfrac{2d}{c_1}\Bigg)\|\bS_q\psi\|_{C_\cN(\oG)}\le
\Bigg(\dfrac{1}{1+c_1}-2d\Bigg)\|\psi\|_{\cC_\cN(\pG)}.
\end{equation}
Inequalities~\eqref{eq84} and~\eqref{eq85} yield
\begin{equation}\label{eq86}
\|(\hat\bB_{\beta}^1+\tilde\bB_{\beta}^1)\bS_q\|\le
\dfrac{1}{1+c_1}-d
\end{equation}
for sufficiently large $q$. Now it follows from~\eqref{eq83} and
\eqref{eq86} that $
\|(\bI-\bB_{\alpha}\bS_q)^{-1}(\hat\bB_{\beta}^1+\tilde\bB_{\beta}^1)\bS_q\|<1
$ for sufficiently large $q$. Hence, there exists the bounded
inverse operator
\begin{equation}\label{eq87}
[\bI-(\bB_\alpha+\hat\bB_{\beta}^1+\tilde\bB_{\beta}^1)\bS_q]^{-1}=
[\bI-(\bI-\bB_{\alpha}\bS_q)^{-1}(\hat\bB_{\beta}^1+\tilde\bB_{\beta}^1)\bS_q]^{-1}
[\bI-\bB_\alpha\bS_q]^{-1}.
\end{equation}

2c. If $\alpha_j(y,\oG)=0$ for $y\in\Gamma_j$, $j=1,\dots,N$,
then, due to Condition~\ref{cond2.7} (part 1), inequality
\eqref{eq85} assumes the form
\begin{equation*}
\|\hat\bB_{\beta i}^1\bS_q\psi\|_{C_\cN(\overline{\Gamma_i})}\le
\Bigg(\dfrac{1}{c_1}-\dfrac{2d}{c_1}\Bigg)\|\bS_q\psi\|_{C_\cN(\oG)}\le
(1-2d)\|\psi\|_{\cC_\cN(\pG)}.
\end{equation*}
Therefore, inequality~\eqref{eq86} reduces to
\begin{equation}\label{eq88}
\|(\hat\bB_{\beta}^1+\tilde\bB_{\beta}^1)\bS_q\|\le 1-d.
\end{equation}

Since $\bB_\alpha=0$ in the case under consideration, it follows
from~\eqref{eq88} that the operator
$$
\bI-(\bB_\alpha+\hat\bB_{\beta}^1+\tilde\bB_{\beta}^1)\bS_q=
\bI-(\hat\bB_{\beta}^1+\tilde\bB_{\beta}^1)\bS_q
$$
has the bounded inverse.

3. It remains to show that the operator
$\bI-\bB_{\alpha\beta}\bS_q$ also has the bounded inverse. By
Condition~\ref{cond2.8}, the operator $\bB_\beta^2$ is compact.
Therefore, the operator $\bB_\beta^2\bS_q$ is also compact. Since
the index of a Fredholm operator is stable under compact
perturbation, we see that the operator
$\bI-\bB_{\alpha\beta}\bS_q$ has the Fredholm property and
$\ind(\bI-\bB_{\alpha\beta}\bS_q)=0$. To prove that
$\bI-\bB_{\alpha\beta}\bS_q$ has the bounded inverse, it now
suffices to show that $\dim\ker(\bI-\bB_{\alpha\beta}\bS_q)=0$.

Let $\psi\in \cC_\cN(\pG)$ and
$(\bI-\bB_{\alpha\beta}\bS_q)\psi=0$. Then the function
$u=\bS_q\psi\in C^\infty(G)\cap C_\cN(\oG)$ is a solution of the
problem
\begin{gather*}
P_0u-qu=0,\quad y\in G,\\
u(y)-\bB_i u(y)-\bB_{\alpha i}u(y)-\bB_{\beta i}u(y) =0,\
y\in\Gamma_i;\qquad u(y)=0,\  y\in\cK.
\end{gather*}
By Corollary~\ref{cor2.1}, we have $u=0$. Therefore,
$\psi=\bB_{\alpha\beta}\bS_q\psi= \bB_{\alpha\beta}u=0$.
\end{proof}

\section{Existence of Feller Semigroups}\label{subsectBoundedExistence}
In this section, we prove that the above bounded perturbations of
elliptic equations with nonlocal conditions satisfying hypotheses
of Secs.~\ref{subsectStatement}--\ref{subsectBoundedReduction} are
generators of some Feller semigroups.

Reducing nonlocal problems to the boundary and using
Lemma~\ref{l2.Boundary}, we prove that the nonlocal problems are
solvable in the space of continuous functions.

\begin{lemma}\label{l2.4}
Let Conditions $\ref{cond1.1}$--$\ref{cond1.2}$,
$\ref{cond2.1''}$--$\ref{cond2.4}$, and
$\ref{cond2.5}$--$\ref{cond2.8}$ hold, and let $q_1$ be
sufficiently large. Then, for any $q\ge q_1$ and $f_0\in C(\oG)$,
 the problem
\begin{equation}\label{eql2.4_1}
qu(y)-P_0u(y)=f_0(y),\quad y\in G,
\end{equation}
\begin{equation}\label{eql2.4_2}
u(y)-\bB_i u(y)-\bB_{\alpha i}u(y)-\bB_{\beta i}u(y)=0,\
y\in\Gamma_i;\qquad u(y)=0,\  y\in\cK,
\end{equation}
admits a unique solution $u\in C_B(\oG)\cap W_{2,\loc}^{2}(G)$.
\end{lemma}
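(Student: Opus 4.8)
The plan is to absorb the right‑hand side $f_0$ into an auxiliary Dirichlet problem and then reduce the remaining purely nonlocal problem to the boundary operator equation already solved in Lemma~\ref{l2.Boundary}; uniqueness will follow from the maximum‑principle estimate of Corollary~\ref{cor2.1} applied with $P_1\equiv0$.

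First I would enlarge $q_1$, if necessary, so that Theorem~\ref{th1-2}, Lemmas~\ref{l4} and~\ref{l5}, and Lemma~\ref{l2.Boundary} are all valid for $q\ge q_1$, and then fix $q\ge q_1$ and $f_0\in C(\oG)$. Applying Theorem~\ref{th1-2} with $\bB_i\equiv0$, right‑hand side $-f_0\in C(\oG)$, and zero boundary data (as is legitimate since Conditions~\ref{cond1.1}--\ref{cond1.2} hold trivially when all $b_{is}=0$), I obtain a unique $w\in C_\cK(\oG)\cap W_{2,\loc}^2(G)$ with $P_0w-qw=-f_0$ in $G$ and $w|_{\pG}=0$. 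Since $w$ vanishes on the whole of $\pG$, in fact $w\in C_\cN(\oG)$, so by Lemma~\ref{l2.2} the functions $\bB_iw,\ \bB_{\alpha i}w,\ \bB_{\beta i}w$ lie in $C_\cN(\overline{\Gamma_i})$ and $g:=\{\bB_iw+\bB_{\alpha i}w+\bB_{\beta i}w\}\in\cC_\cN(\pG)$.

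Next I would seek the solution in the form $u=w+v$ with $v=\bS_q\psi$, where $\psi\in\cC_\cN(\pG)$ is still to be chosen. For every such $\psi$, the function $u$ lies in $C_\cN(\oG)\cap W_{2,\loc}^2(G)$, satisfies~\eqref{eql2.4_1}, and vanishes on $\cK$. Substituting $u=w+v$ into~\eqref{eql2.4_2} on $\Gamma_i$ and using $w|_{\Gamma_i}=0$ together with the identity $v|_{\Gamma_i}-\bB_iv=\psi_i$ (which holds by the definition of $\bS_q$), I find that~\eqref{eql2.4_2} is equivalent to the boundary equation $(\bI-\bB_{\alpha\beta}\bS_q)\psi=g$. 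By Lemma~\ref{l2.Boundary}, the operator $\bI-\bB_{\alpha\beta}\bS_q$ has a bounded inverse on $\cC_\cN(\pG)$ for $q\ge q_1$, so $\psi=(\bI-\bB_{\alpha\beta}\bS_q)^{-1}g$ is well defined; then $v=\bS_q\psi\in C^\infty(G)\cap C_\cN(\oG)$ and $u=w+v\in C_\cN(\oG)\cap W_{2,\loc}^2(G)$ solves~\eqref{eql2.4_1}--\eqref{eql2.4_2}. Since $u$ satisfies the nonlocal conditions in the form~\eqref{eq63}, it belongs to $C_B(\oG)$, giving the desired existence.

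For uniqueness I would observe that Condition~\ref{cond2.1'} holds trivially for $P_1\equiv0$, so Corollary~\ref{cor2.1} (applied with $P_1=0$) yields the a priori bound $\|u\|_{C(\oG)}\le q^{-1}\|f_0\|_{C(\oG)}$ for every solution $u\in C_B(\oG)$; applying this to the difference of two solutions, for which the right‑hand side is zero, forces them to coincide. I expect no genuinely new difficulty here: the essential analytic step — invertibility of $\bI-\bB_{\alpha\beta}\bS_q$ under Conditions~\ref{cond2.7}--\ref{cond2.8} — has already been carried out in Lemma~\ref{l2.Boundary}, and the only point needing care is to keep every object in the subspace $C_\cN$ rather than merely $C_\cK$, which is exactly why $f_0$ is first removed by a Dirichlet solution with zero boundary trace before the reduction to the boundary is performed.
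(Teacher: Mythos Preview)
Your argument is correct and follows the paper's strategy: solve an auxiliary problem to absorb $f_0$, reduce the remaining homogeneous equation to the boundary operator equation $(\bI-\bB_{\alpha\beta}\bS_q)\psi=\text{RHS}$, invoke Lemma~\ref{l2.Boundary}, and get uniqueness from Corollary~\ref{cor2.1} with $P_1=0$. The one difference is in the choice of auxiliary problem: the paper takes $v$ to solve the \emph{partial} nonlocal problem $qv-P_0v=f_0$, $v|_{\Gamma_i}-\bB_iv=0$ (so the boundary right-hand side becomes simply $\bB_{\alpha\beta}v$), whereas you take the pure Dirichlet solution $w$ with $w|_{\pG}=0$, which makes $w\in C_\cN(\oG)$ immediate but forces the extra term $\bB_iw$ into your right-hand side $g$. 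Both choices lead to the same invertibility question already settled in Lemma~\ref{l2.Boundary}, so the difference is cosmetic; your Dirichlet variant is arguably a bit more transparent about why the data land in $\cC_\cN(\pG)$, while the paper's choice keeps the boundary right-hand side slightly cleaner.
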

\begin{proof}
Let us consider the auxiliary problem
\begin{equation}\label{eq70}
qv(y)-P_0v(y)=f_0(y),\  y\in G;\qquad v(y)-\bB_i v(y) =0,\
y\in\Gamma_i,\ i=1,\dots,N.
\end{equation}

Since $f_0\in C(\oG)$, it follows from Theorem~\ref{th1-2} that
there exists a unique solution $v\in C_\cK(\oG)$  of problem
\eqref{eq70}. Therefore, $v\in C_\cN(\oG)$.

2. Set $w=u-v$. The unknown function $w$ belongs to $C_\cN(\oG)$,
and, by virtue of \eqref{eql2.4_1}--\eqref{eq70}, it satisfies the
relations
\begin{equation}\label{eq71}
\begin{aligned}
qw(y)-P_0w(y)&=0,& & y\in G,\\
w(y)-\bB_i w(y)-\bB_{\alpha i}w(y)-\bB_{\beta i}w(y)&=\bB_{\alpha
i}v(y)+\bB_{\beta i}v(y), & & y\in\Gamma_i,\
i=1,\dots,N,\\
w(y)&=0, & & y\in\cK.
\end{aligned}
\end{equation}

It follows from Condition~\ref{cond2.5} that problem~\eqref{eq71}
is equivalent to the operator equation
$\psi-\bB_{\alpha\beta}\bS_q\psi=\bB_{\alpha\beta}v$ for the
unknown function $\psi\in\cC_\cN(\pG)$. Lemma~\ref{l2.Boundary}
implies that this equation admits a unique solution
$\psi\in\cC_\cN(\pG)$. In this case, problem~\eqref{eql2.4_1},
\eqref{eql2.4_2} admits a unique solution
$$
u=v+w=v+\bS_q\psi=v+\bS_q(\bI-\bB_{\alpha\beta}\bS_q)^{-1}\bB_{\alpha\beta}v\in
C_B(\oG).
$$
Moreover, $u\in W^2_{2,\loc}(G)$ due to the interior regularity
theorem for elliptic equations.
\end{proof}

Using Lemma~\ref{l2.4} and the assumptions concerning the bounded
perturbations (see Condition~\ref{cond2.1'}), we prove that the
perturbed problems are solvable in the space of continuous
functions.

\begin{lemma}\label{l2.5}
Let Conditions $\ref{cond1.1}$--$\ref{cond1.2}$,
$\ref{cond2.1'}$--$\ref{cond2.4}$, and
$\ref{cond2.5}$--$\ref{cond2.8}$ hold, and let $q_1$ be
sufficiently large. Then, for any $q\ge q_1$ and $f_0\in C(\oG)$,
the problem
\begin{equation}\label{eql2.5_1}
qu-(P_0+P_1)u=f_0(y),\qquad y\in G,
\end{equation}
\begin{equation}\label{eql2.5_2}
u(y)-\bB_i u(y)-\bB_{\alpha i}u(y)-\bB_{\beta i}u(y) =0,\
y\in\Gamma_i;\qquad u(y) =0,\  y\in\cK,
\end{equation}
admits a unique solution $u\in C_B(\oG)\cap W_{2,\loc}^{2}(G)$.
\end{lemma}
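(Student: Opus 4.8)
The plan is to treat $P_1$ as a bounded perturbation of the problem already solved in Lemma~\ref{l2.4}. Let $\mathbf G_q:C(\oG)\to C_B(\oG)\cap W^2_{2,\loc}(G)$ denote the operator assigning to each $f_0\in C(\oG)$ the unique solution $u$ of problem~\eqref{eql2.4_1},~\eqref{eql2.4_2}; by Lemma~\ref{l2.4} it is well defined for $q\ge q_1$, and by the uniqueness statement there it is linear. Applying Corollary~\ref{cor2.1} with $P_1$ replaced by the zero operator (which trivially satisfies Condition~\ref{cond2.1'}) gives the bound $\|\mathbf G_q\|\le 1/q$.

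Next I would rewrite problem~\eqref{eql2.5_1},~\eqref{eql2.5_2} as the fixed-point equation $u=\mathbf G_q(f_0+P_1u)$, i.e., $(\bI-\mathbf G_qP_1)u=\mathbf G_qf_0$, to be solved in $C(\oG)$. Since $P_1:C(\oG)\to C(\oG)$ is bounded by Condition~\ref{cond2.1'}, we have $\|\mathbf G_qP_1\|\le\|P_1\|/q<1$ as soon as $q_1>\|P_1\|$ (which we arrange by enlarging $q_1$ if necessary). Hence $\bI-\mathbf G_qP_1$ has a bounded inverse on $C(\oG)$, supplied by the Neumann series, and the unique solution of the fixed-point equation in $C(\oG)$ is $u=(\bI-\mathbf G_qP_1)^{-1}\mathbf G_qf_0$.

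It then remains to identify $u$ and to check its regularity. Since $f_0+P_1u\in C(\oG)$ and $u=\mathbf G_q(f_0+P_1u)$ belongs to the range of $\mathbf G_q$, Lemma~\ref{l2.4} immediately gives that $u\in C_B(\oG)\cap W^2_{2,\loc}(G)$, that $u$ satisfies the nonlocal conditions~\eqref{eql2.5_2} (which coincide with~\eqref{eq63}), and that $qu-P_0u=f_0+P_1u$ in $G$; rearranging yields~\eqref{eql2.5_1}. For uniqueness, if $u_1$ and $u_2$ were two solutions, then $w=u_1-u_2\in C_B(\oG)$ would solve $qw-(P_0+P_1)w=0$ in $G$, and Corollary~\ref{cor2.1} would force $\|w\|_{C(\oG)}\le 0$, i.e., $w=0$.

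I do not expect a genuinely hard step here; this is the standard bounded-perturbation argument for resolvents. The only points that need care are the bookkeeping of the function spaces in the composition $\mathbf G_qP_1$---so that the Neumann series is formed on all of $C(\oG)$ while the resulting $u$ automatically lands back in $C_B(\oG)\cap W^2_{2,\loc}(G)$ because it lies in the range of $\mathbf G_q$---and the choice of $q_1$ large enough to meet simultaneously the hypothesis of Lemma~\ref{l2.4} and the contraction bound $q_1>\|P_1\|$.
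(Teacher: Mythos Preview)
Your proposal is correct and follows essentially the same bounded-perturbation/Neumann-series argument as the paper. The only cosmetic difference is the order of factorization: the paper writes $qI-P_0-P_1=(I-P_1(qI-P_0)^{-1})(qI-P_0)$ and inverts the left factor, whereas you set $\mathbf G_q=(qI-P_0)^{-1}$ and invert $\bI-\mathbf G_qP_1$; these are equivalent via the identity $A(I-BA)^{-1}=(I-AB)^{-1}A$, and your explicit tracking of why the solution lands back in $C_B(\oG)\cap W^2_{2,\loc}(G)$ is a nice touch that the paper leaves implicit.
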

\begin{proof}
Consider the operator $qI-P_0$ as the operator acting from
$C(\oG)$ to $C(\oG)$ with the domain
$$
\Dom(qI-P_0)=\{u\in C_B(\oG)\cap W^2_{2,\loc}(G): P_0u\in
C(\oG)\}.
$$
Lemma~\ref{l2.4} and Corollary~\ref{cor2.1} imply that there
exists the bounded operator $(qI-P_0)^{-1}: C(\oG)\to C(\oG)$ and
$$
\|(qI-P_0)^{-1}\|\le 1/q.
$$

Introduce the operator $qI-P_0-P_1: C(\oG)\to C(\oG)$ with the
domain $\Dom(qI-P_0-P_1)=\Dom(qI-P_0)$. Since
$$
qI-P_0-P_1= (I-P_1(qI-P_0)^{-1})(qI-P_0),
$$
it follows that the operator $qI-P_0-P_1: C(\oG)\to C(\oG)$ has
the bounded inverse for $q\ge q_1$, provided that $q_1$ is so
large that $ \|P_1\|\cdot \|(qI-P_0)^{-1}\|\le 1/2$, $ q\ge q_1.$
\end{proof}

We consider the unbounded operator $\bP_B: \Dom(\bP_B)\subset
C_B(\overline G)\to C_B(\overline G)$ given by
\begin{equation}\label{eqbP_BBoundedPert}
\bP_B u=P_0u+P_1u,\qquad u\in \Dom(\bP_B)=\{u\in C_B(\oG)\cap
W^2_{2,\loc}(G): P_0u+P_1u\in C_B(\overline G)\}.
\end{equation}

\begin{lemma}\label{l2.6}
Let Conditions $\ref{cond1.1}$--$\ref{cond1.2}$,
$\ref{cond2.1'}$--$\ref{cond2.4}$, and
$\ref{cond2.5}$--$\ref{cond2.8}$ hold. Then the set $\Dom(\bP_B)$
is dense in $C_B(\oG)$.
\end{lemma}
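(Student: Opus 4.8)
The plan is to establish density by the standard resolvent argument, after reducing to a convenient dense subset of ``regular'' functions. First, for $q$ large enough, Lemma~\ref{l2.5} together with Corollary~\ref{cor2.1} shows that $qI-\bP_B\colon\Dom(\bP_B)\to C_B(\oG)$ is a bijection with $\|(qI-\bP_B)^{-1}\|\le 1/q$: for $f_0\in C_B(\oG)$ the solution $u\in C_B(\oG)\cap W^2_{2,\loc}(G)$ of $qu-(P_0+P_1)u=f_0$ given by Lemma~\ref{l2.5} satisfies $(P_0+P_1)u=qu-f_0\in C_B(\oG)$, hence $u\in\Dom(\bP_B)$.

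It therefore suffices to prove that $\mathcal D:=\{g\in C_B(\oG)\cap W^2_{2,\loc}(G):P_0g\in C(\oG)\}$ is dense in $C_B(\oG)$. Indeed, for $g\in\mathcal D$ and $q$ large, $g_q:=(qI-\bP_B)^{-1}(qg)\in\Dom(\bP_B)$ is well defined (as $qg\in C_B(\oG)$), and $g_q-g\in C_B(\oG)\cap W^2_{2,\loc}(G)$ solves $q(g_q-g)-(P_0+P_1)(g_q-g)=(P_0+P_1)g$ in $G$ with the homogeneous nonlocal conditions; since $P_1g\in C(\oG)$ automatically, Corollary~\ref{cor2.1} gives $\|g_q-g\|_{C(\oG)}\le q^{-1}\|(P_0+P_1)g\|_{C(\oG)}\to0$ as $q\to\infty$, whence $g\in\overline{\Dom(\bP_B)}$, and density of $\mathcal D$ implies density of $\Dom(\bP_B)$.

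The density of $\mathcal D$ is the core of the argument, and I expect the main obstacle there to be the nonlocal conditions~\eqref{eq63}, which generic smooth functions do not satisfy. Given $f\in C_B(\oG)$ and $\epsilon>0$: since $f\in C_B(\oG)\subset C_\cN(\oG)$ (see~\eqref{eqBNK}), $f$ vanishes on $\cN$, hence on $\overline{\cN}$, so I would first approximate $f$ in $C(\oG)$ by a function $f_\epsilon\in C^\infty(\oG)$ that vanishes in a neighborhood of $\overline{\cN}$ (mollify $f$, then multiply by a cutoff equal to $0$ near $\overline{\cN}$ and $1$ away from it; the error is small because $f$, and hence its mollification, is uniformly small near $\overline{\cN}$). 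Then $f_\epsilon\in C_\cN(\oG)$, $f_\epsilon|_\cK=0$, $P_0f_\epsilon\in C(\oG)$, but $f_\epsilon$ need not satisfy~\eqref{eq63}. Its defect $\phi_i:=f_\epsilon|_{\Gamma_i}-\bB_if_\epsilon-\bB_{\alpha i}f_\epsilon-\bB_{\beta i}f_\epsilon$ lies in $C_\cN(\overline{\Gamma_i})$ by Lemma~\ref{l2.2} and Condition~\ref{cond2.5}, and --- crucially --- it is small: since $f$ itself satisfies~\eqref{eq63}, $\phi_i=(f_\epsilon-f)|_{\Gamma_i}-\bB_i(f_\epsilon-f)-\bB_{\alpha i}(f_\epsilon-f)-\bB_{\beta i}(f_\epsilon-f)$, so $\|\phi\|_{\cC_\cN(\pG)}\le 2\|f_\epsilon-f\|_{C(\oG)}$ by Lemma~\ref{l2.2}.

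To finish, I would correct $f_\epsilon$ by subtracting the solution of an auxiliary nonlocal problem with boundary data $\phi$. Fixing a value $q\ge q_1$, the reduction-to-the-boundary construction of Lemmas~\ref{l2.4} and~\ref{l2.Boundary}, now with nonhomogeneous boundary data, produces $z=\bS_q\phi+\bS_q(\bI-\bB_{\alpha\beta}\bS_q)^{-1}\bB_{\alpha\beta}\bS_q\phi\in C^\infty(G)\cap C_\cN(\oG)$ solving $qz-P_0z=0$ in $G$, $z-\bB_iz-\bB_{\alpha i}z-\bB_{\beta i}z=\phi_i$ on $\Gamma_i$, $z|_\cK=0$, with $\|z\|_{C(\oG)}\le C_0\|\phi\|_{\cC_\cN(\pG)}$ and $C_0$ depending only on $q$ (through $c_1$ and $\|(\bI-\bB_{\alpha\beta}\bS_q)^{-1}\|$). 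Setting $g:=f_\epsilon-z$ then yields $g\in C(\oG)\cap W^2_{2,\loc}(G)$, $P_0g=P_0f_\epsilon-qz\in C(\oG)$, and cancellation of defects makes $g$ satisfy~\eqref{eq63} with $g|_\cK=0$; thus $g\in\mathcal D$, and $\|f-g\|_{C(\oG)}\le\|f-f_\epsilon\|_{C(\oG)}+C_0\|\phi\|_{\cC_\cN(\pG)}\le(1+2C_0)\|f-f_\epsilon\|_{C(\oG)}<\epsilon$ once $f_\epsilon$ is taken close enough to $f$. The steps I expect to need the most care are the smallness estimate for the defect $\phi$ and the fixed-$q$ bound $\|z\|\le C_0\|\phi\|$ for the auxiliary problem, since it is precisely this bound that transfers smallness of $\phi$ to smallness of the correction $z$.
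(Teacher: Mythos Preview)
Your proof is correct and follows essentially the same route as the paper's: approximate $u\in C_B(\oG)$ by a smooth function in $C_\cN(\oG)$, measure the small boundary defect, correct it via the auxiliary problem $\bS_q(\bI-\bB_{\alpha\beta}\bS_q)^{-1}\phi$ to land in $C_B(\oG)$ with $P_0(\cdot)\in C(\oG)$, and then pass to $\Dom(\bP_B)$ by the resolvent $(\lambda I-\bP_B)^{-1}$. The only organizational difference is that you place the resolvent step first (as a reduction to the density of $\mathcal D$) whereas the paper places it last (constructing $u_1\to u_2\to u_3$ in sequence); the underlying estimates --- Lemma~\ref{l2.2} for the defect, Theorem~\ref{th1-2} and Lemma~\ref{l2.Boundary} for the correction, and Corollary~\ref{cor2.1} for the resolvent step --- are identical.
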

\begin{proof}
We will follow the scheme proposed in~\cite{GalSkJDE}.

1. Let $u\in C_B(\oG)$. Since $C_B(\oG)\subset C_\cN(\oG)$ due to
\eqref{eqBNK}, it follows that, for any $\varepsilon>0$ and $q\ge
q_1$, there is a function $u_1\in C^\infty(\oG)\cap C_\cN(\oG)$
such that
\begin{equation}\label{eq3.12}
\|u-u_1\|_{C(\oG)}\le\min(\varepsilon,\varepsilon/(2c_1k_q)),
\end{equation}
where $k_q=\|(\bI-\bB_{\alpha\beta}\bS_q)^{-1}\|$.

Set
\begin{equation}\label{eq3.14}
\begin{aligned}
f_0(y)&\equiv qu_1-P_0 u_1, & & y\in G,\\
\psi_i(y)&\equiv u_1(y)-\bB_i u_1(y)-\bB_{\alpha
i}u_1(y)-\bB_{\beta i}u_1(y),& & y\in\Gamma_i,\ i=1,\dots,N.
\end{aligned}
\end{equation}

Since $u_1\in C_\cN(\oG)$, it follows from Condition~\ref{cond2.5}
that $\{\psi_i\}\in \cC_\cN(\pG)$. Using the relation
$$
u(y)-\bB_i u(y)-\bB_{\alpha i}u(y)-\bB_{\beta i}u(y)=0,\qquad
 y\in\Gamma_i,
$$
inequality~\eqref{eq3.12}, and Lemma~\ref{l2.2}, we obtain
\begin{equation}\label{eq3.13}
\|\{\psi_i\}\|_{\cC_\cN(\pG)}\le\|u-u_1\|_{C(\oG)}
+\|(\bB+\bB_{\alpha\beta})(u-u_1)\|_{\cC_\cN(\pG)}\le\varepsilon/(c_1k_q).
\end{equation}

Consider the auxiliary nonlocal problem
\begin{equation}\label{eq3.15}
\begin{gathered}
qu_2-P_0 u_2 = f_0(y),\quad y\in G,\\
u_2(y)-\bB_i u_2(y)-\bB_{\alpha i}u_2(y)-\bB_{\beta i}u_2(y) =0,\
y\in\Gamma_i;\qquad u_2(y) =0,\ y\in\cK.
\end{gathered}
\end{equation}
Since $f_0\in C^\infty(\oG)$, it follows from Lemma~\ref{l2.4}
that problem~\eqref{eq3.15} has a unique solution $u_2\in
C_B(\oG)\subset C_\cN(\oG)$.

Using~\eqref{eq3.14},~\eqref{eq3.15}, and the relations
$u_1(y)=u_2(y)=0$, $y\in\cK$, we see that the function
$w_1=u_1-u_2$ satisfies the relations
\begin{equation}\label{eq3.16}
\begin{gathered}
q w_1-P_0 w_1 =0,\quad y\in G,\\
w_1(y)-\bB_i w_1(y)-\bB_{\alpha i}w_1(y)-\bB_{\beta i}w_1(y)
=\psi_i(y),\ y\in\Gamma_i;\qquad w_1(y) =0,\ y\in\cK.
\end{gathered}
\end{equation}
It follows from Condition~\ref{cond2.5} that problem
\eqref{eq3.16} is equivalent to the operator equation
$\phi-\bB_{\alpha\beta}\bS_q\phi=\psi$ in $\cC_\cN(\pG)$, where
$w_1=\bS_q\phi$. Lemma~\ref{l2.Boundary} implies that this
equation admits a unique solution $\phi\in\cC_\cN(\pG)$.
Therefore, using Theorem~\ref{th1-2} and
inequality~\eqref{eq3.13}, we obtain
\begin{equation}\label{eq3.17}
\|w_1\|_{C(\oG)}\le c_1\|(\bI-\bB_{\alpha\beta}\bS_q)^{-1}\|\cdot
\|\{\psi_i\}\|_{\cC_\cN(\pG)}\le c_1
k_q\varepsilon/(c_1k_q)=\varepsilon.
\end{equation}

2. Finally, we consider the problem
\begin{equation}\label{eq3.18}
\begin{gathered}
\lambda u_3-P_0u_3-P_1u_3 =\lambda u_2,\quad y\in G,\\
u_3(y)-\bB_i u_3(y)-\bB_{\alpha i}u_3(y)-\bB_{\beta i}u_3(y) =0,\
y\in\Gamma_i;\qquad u_3(y) =0,\ y\in\cK.
\end{gathered}
\end{equation}
Since $u_2\in C_B(\oG)$, it follows from Lemma~\ref{l2.5} that
problem~\eqref{eq3.18} admits a unique solution $u_3\in
\Dom(\bP_B)$ for sufficiently large $\lambda$.

Denote $w_2=u_2-u_3$. It follows from~\eqref{eq3.18} that
$$
\lambda w_2-P_0w_2-P_1w_2=-P_0 u_2-P_1u_2=f_0-qu_2-P_1u_2.
$$
Applying Corollary~\ref{cor2.1}, we have
$$
\|w_2\|_{C(\oG)}\le\dfrac{1}{\lambda}\|f_0-qu_2-P_1u_2\|_{C(\oG)}.
$$
Choosing sufficiently large $\lambda$ yields
\begin{equation}\label{eq3.20}
\|w_2\|_{C(\oG)}\le\varepsilon.
\end{equation}

Inequalities~\eqref{eq3.12},~\eqref{eq3.17}, and~\eqref{eq3.20}
imply
$$
\|u-u_3\|_{C(\oG)}\le\|u-u_1\|_{C(\oG)}+\|u_1-u_2\|_{C(\oG)}+
\|u_2-u_3\|_{C(\oG)}\le 3\varepsilon.
$$
\end{proof}

Now we can prove the  main result of the paper.
\begin{theorem}\label{th2.1}
Let Conditions $\ref{cond1.1}$--$\ref{cond1.2}$,
$\ref{cond2.1'}$--$\ref{cond2.4}$, and
$\ref{cond2.5}$--$\ref{cond2.8}$ hold. Then the operator
$\bP_B:\Dom(\bP_B)\subset C_B(\oG)\to C_B(\oG)$ is a generator of
a Feller semigroup.
\end{theorem}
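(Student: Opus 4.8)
The plan is to invoke the converse half of the Hille--Iosida theorem (Theorem~\ref{thHI}, item~2) with $X=C_B(\oG)$ and $q_0=q_1$, where $q_1$ is the number supplied by Lemmas~\ref{l2.5} and~\ref{l2.6}. Thus I must check three things for $q\ge q_1$: density of $\Dom(\bP_B)$ in $C_B(\oG)$; that $q\bI-\bP_B$ is a bijection onto $C_B(\oG)$ with $\|(q\bI-\bP_B)^{-1}\|\le 1/q$; and that $(q\bI-\bP_B)^{-1}$ is a nonnegative operator.

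Condition~$(a)$ is exactly Lemma~\ref{l2.6}. For condition~$(b)$, fix $q\ge q_1$ and $f_0\in C_B(\oG)\subset C(\oG)$. By Lemma~\ref{l2.5} the problem $qu-(P_0+P_1)u=f_0$ in $G$ together with the nonlocal conditions~\eqref{eql2.5_2} has a unique solution $u\in C_B(\oG)\cap W^2_{2,\loc}(G)$; since $(P_0+P_1)u=qu-f_0\in C_B(\oG)$, this $u$ lies in $\Dom(\bP_B)$ and satisfies $(q\bI-\bP_B)u=f_0$. Hence $q\bI-\bP_B:\Dom(\bP_B)\to C_B(\oG)$ is a bijection, and Corollary~\ref{cor2.1} applied to $u$ gives $\|u\|_{C(\oG)}\le q^{-1}\|f_0\|_{C(\oG)}$, i.e. $\|(q\bI-\bP_B)^{-1}\|\le 1/q$.

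It remains to verify condition~$(c)$: if $f_0\in C_B(\oG)$ with $f_0\ge0$, then $u=(q\bI-\bP_B)^{-1}f_0\ge0$. Suppose the contrary; then $-u$ attains a positive maximum at some $y^0\in\oG$. Since $u\in\Dom(\bP_B)$, we have $P_0u=(P_0+P_1)u-P_1u\in C(\oG)$ (using that $P_1$ maps $C(\oG)$ into $C(\oG)$), so Lemma~\ref{l2.3} applies to $-u\in C_B(\oG)$ and yields a point $y^1\in G$ with $-u(y^1)=-u(y^0)=\max_{\oG}(-u)>0$ and $P_0(-u)(y^1)+P_1(-u)(y^1)\le0$, that is, $P_0u(y^1)+P_1u(y^1)\ge0$. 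On the other hand, evaluating $qu-(P_0+P_1)u=f_0$ at $y^1$ gives $P_0u(y^1)+P_1u(y^1)=qu(y^1)-f_0(y^1)\le qu(y^1)<0$, since $u(y^1)<0$ and $f_0(y^1)\ge0$. This contradiction shows $u\ge0$, so $(q\bI-\bP_B)^{-1}$ is nonnegative. All hypotheses of Theorem~\ref{thHI}, item~2, now hold with $q_0=q_1$, and $\bP_B$ is the generator of a uniquely determined Feller semigroup on $C_B(\oG)$.

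I do not anticipate a genuine obstacle in this final step: the substantive content---solvability of the perturbed nonlocal problem (Lemma~\ref{l2.5}), the a priori bound (Corollary~\ref{cor2.1}), density of the domain (Lemma~\ref{l2.6}), and the maximum-principle Lemma~\ref{l2.3}---has already been established. The only points requiring a little care are applying Lemma~\ref{l2.3} to $-u$ rather than $u$ in order to obtain the sign of the resolvent, and checking that $P_0u$ is continuous in $G$ so that Lemma~\ref{l2.3} is indeed applicable.
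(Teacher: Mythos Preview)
Your proof is correct and follows essentially the same route as the paper: density via Lemma~\ref{l2.6}, existence and the resolvent bound via Lemma~\ref{l2.5} and Corollary~\ref{cor2.1}, and nonnegativity by applying Lemma~\ref{l2.3} to $-u$ to derive a contradiction. The paper additionally inserts a brief remark that $\bP_B$ is closed (deduced from boundedness of the resolvent), but this is not required by the version of the Hille--Iosida theorem stated in Theorem~\ref{thHI}, so your omission of it is harmless.
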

\begin{proof}
1. By Lemma~\ref{l2.5} and Corollary~\ref{cor2.1}, there exists
the bounded operator $(qI-\bP_B)^{-1}: C_B(\oG)\to C_B(\oG)$ and
$$
\|(qI-\bP_B)^{-1}\|\le 1/q
$$
for all sufficiently large $q>0$.

2. Since the operator $(qI-\bP_B)^{-1}$ is bounded and defined on
the whole space $C_B(\oG)$, it is closed. Therefore, the operator
$qI-\bP_B:\Dom(\bP_B)\subset C_B(\oG)\to C_B(\oG)$ is closed.
Hence, $\bP_B:\Dom(\bP_B)\subset C_B(\oG)\to C_B(\oG)$ is also
closed.

3. Let us prove that the operator $(qI-\bP_B)^{-1}$ is
nonnegative. Assume the contrary; then there exists a function
$f_0\ge0$ such that a solution $u\in\Dom(\bP_B)$ of the equation
$qu-\bP_Bu=f_0$ achieves its negative minimum at some point
$y^0\in \oG$. In this case, the function $v=-u$ achieves its
positive maximum at the point $y^0$. By Lemma~\ref{l2.3}, there is
a point $y^1\in G$ such that $v(y^1)=v(y^0)$ and $\bP_B v(y^1)\le
0$. Therefore, $ 0<v(y^0)=v(y^1)=(\bP_Bv(y^1)-f_0(y^1))/q\le 0. $
This contradiction proves that $u\ge0$.

Thus, all the hypotheses of the Hille--Iosida theorem
(Theorem~\ref{thHI}) are fulfilled. Hence,
$\bP_B:\Dom(\bP_B)\subset C_B(\oG)\to C_B(\oG)$ is a generator of
a Feller semigroup.
\end{proof}

As a conclusion, we give an example of nonlocal conditions
satisfying the assumptions of the paper.

Let $G\subset\bbR^2$ be a bounded domain with boundary
$\pG=\Gamma_1\cup\Gamma_2\cup\cK$, where $\Gamma_1$ and $\Gamma_2$
are $C^\infty$ curves open and connected in the topology of $\pG$
such that $\Gamma_1\cap\Gamma_2=\varnothing$ and
$\overline{\Gamma_1}\cap\overline{\Gamma_2}=\cK$; the set $\cK$
consists of two points $g_1$ and $g_2$. We assume that the domain
$G$ coincides with some plane angle in an
$\varepsilon$-neighborhood of the point $g_i$, $i=1,2$. Let
$\Omega_j$, $j=1,\dots,4$, be continuous transformations defined
on $\overline{\Gamma_1}$ and satisfying  the following conditions
(see Fig.~\ref{figEx3-4}):
\begin{figure}[ht]
{ \hfill\epsfxsize130mm\epsfbox{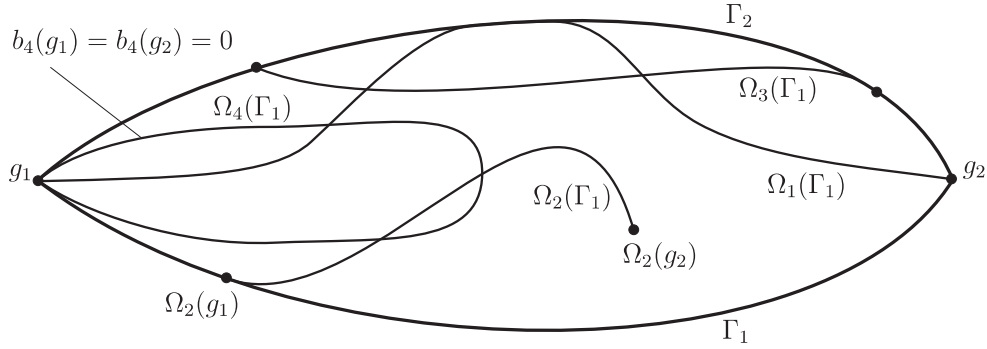}\hfill\ }
\caption{Nontransversal nonlocal conditions}
   \label{figEx3-4}
\end{figure}
\begin{enumerate}
\item
$\Omega_1(\cK)\subset\cK$,
$\Omega_1(\Gamma_1\cap\cO_\varepsilon(\cK))\subset G$,
$\Omega_1(\Gamma_1\setminus\cO_\varepsilon(\cK))\subset
G\cup\Gamma_2$, and $\Omega_1(y)$ is a composition of shift of the
argument, rotation, and homothety for
$y\in\overline{\Gamma_1}\cap\cO_\varepsilon(\cK)$;
\item
there exist numbers $\varkappa_1>\varkappa_2>0$ and $\sigma>0$
such that
$\Omega_2(\overline{\Gamma_1})\subset\oG\setminus\cO_{\varkappa_1}(\cK)$
and
$\Omega_2(\overline{\Gamma_1}\setminus\cO_{\varkappa_2}(\cK))\subset
\overline{G_\sigma}$; moreover, $\Omega_2(g_1)\in\Gamma_1$ and
$\Omega_2(g_2)\in G$;
\item
$\Omega_3(\overline{\Gamma_1})\subset G\cup{\Gamma_2}$ and
$\Omega_3(\cK)\subset\Gamma_2$;
\item
$\Omega_4(\overline{\Gamma_1})\subset G\cup\overline{\Gamma_2}$
and $\Omega_4(\cK)\subset\cK$.
\end{enumerate}

Let  $b_1\in C(\overline{\Gamma_1})\cap
C^\infty(\overline{\Gamma_1}\cap\cO_\varepsilon(\cK))$,
$b_2,b_3,b_4\in C(\overline{\Gamma_1})$, and $b_j\ge0$,
$j=1,\dots,4$.

Let $G_1$ be a bounded domain, $G_1\subset G$, and $\Gamma\subset
\oG$ be a curve of class $C^1$. Introduce continuous nonnegative
functions $c(y,\eta)$, $y\in\overline{\Gamma_1}$,
$\eta\in\overline{G_1}$, and $d(y,\eta)$,
$y\in\overline{\Gamma_1}$, $\eta\in\overline{\Gamma}$.

Consider the following nonlocal conditions:
\begin{equation}\label{eq89-90}
\begin{aligned}
u(y)-\sum\limits_{j=1}^4 b_j(y)u(\Omega_j(y))-
\int\limits_{G_1}c(y,\eta)u(\eta)d\eta-
\int\limits_{\Gamma}d(y,\eta)u(\eta)d\Gamma_\eta &=0, &
&y\in\Gamma_1,\\
u(y) &=0, & &y\in\overline{\Gamma_2}.
\end{aligned}
\end{equation}

Let $Q\subset\oG$ be an arbitrary Borel set; introduce the measure
$\mu(y,\cdot)$, $y\in\pG$:
\begin{equation*}
\begin{aligned}
\mu(y,Q)&=\sum\limits_{j=1}^4 b_j(y)\chi_Q(\Omega_j(y))+
\int\limits_{G_1\cap Q}c(y,\eta)d\eta+ \int\limits_{\Gamma\cap
Q}d(y,\eta)u(\eta)d\Gamma_\eta, &
&y\in\Gamma_1,\\
\mu(y,Q) &=0, & &y\in\overline{\Gamma_2},
\end{aligned}
\end{equation*}

Let $\cN$ and $\cM$ be defined as before. Assume that
\begin{equation*}
\begin{gathered}
\mu(y,\oG)=\sum\limits_{j=1}^4 b_j(y)+\int\limits_{G_1}
c(y,\eta)\,d\eta+\int\limits_\Gamma d(y,\eta)\,d\Gamma_\eta\le
1,\quad y\in\pG,\\
\int\limits_{\Gamma\cap\cM}d(y,\eta)d\Gamma_\eta<1,\quad y\in\cM;\\
 b_2(g_1)=0\
\text{or}\ \mu(\Omega_2(g_1),\oG)=0,\quad b_2(g_2)=0;\quad
b_4(g_j)=0;\quad c(g_j,\cdot)=0;\quad d(g_j,\cdot)=0.
\end{gathered}
\end{equation*}

Setting $b(y)=1-\mu(y,\oG)$, we can rewrite \eqref{eq89-90} in the
form (cf.~\eqref{eq56})
$$
b(y)u(y)+\int\limits_\oG [u(y)-u(\eta)]\mu(y,d\eta)=0,\quad
y\in\pG.
$$

Introduce a cut-off function $\zeta\in C^\infty(\bbR^2)$ supported
in $\cO_\varepsilon(\cK)$, equal to $1$ on
$\cO_{\varepsilon/2}(\cK)$, and such that $0\le\zeta(y)\le 1$ for
$y\in\bbR^2$.  Let $y\in\overline{\Gamma_1}$ and $Q\subset\oG$ be
a Borel set; denote
\begin{equation*}
\begin{gathered}
\delta(y,Q)=\zeta(y)b_1(y)\chi_Q(\Omega_1(y)),\qquad
\alpha(y,Q)=b_2(y)\chi_Q(\Omega_2(y)),\\
\beta^1(y,Q)=\big(1-\zeta(y)\big)b_1(y)\chi_Q(\Omega_1(y))+\sum\limits_{j=3,4}
b_j(y)\chi_Q(\Omega_j(y)),\\
 \beta^2(y,Q)=\int\limits_{G_1\cap
Q}c(y,\eta)d\eta+ \int\limits_{\Gamma\cap Q}d(y,\eta)u(\eta)d\Gamma_\eta
\end{gathered}
\end{equation*}
(for simplicity, we have omitted the subscript ``1'' in the notation of the measures).
One can directly verify that these measures satisfy
Conditions~\ref{condK1},~\ref{cond1.2}, \ref{cond2.1''}--\ref{cond2.4},
and~\ref{cond2.5}--\ref{cond2.8}.

\smallskip

The author is grateful to Prof. A.L. Skubachevskii for attention
to this work.


\begin{thebibliography}{99}


\bibitem{BCP}
J.~M. Bony, P.~Courrege, P.~Priouret, ``Semi-groups de Feller sur
une vari\'et\'e \`a bord compacte et probl\`emes aux limites
int\'egro-diff\'erentiels du second ordre donnant lieu au principe
du maximum,'' {\it Ann. Inst. Fourier  \rm(Grenoble}) {\bf 18},
(1968) 369--521.


\bibitem{Feller1}
{W. Feller}, ``The parabolic differential equations and the
associated semi-groups of transformations,'' {\it Ann. of Math.}
{\bf 55} 468--519 (1952).


\bibitem{Feller2}
{W. Feller}, ``Diffusion processes in one dimension,'' {\it Trans.
Amer. Math. Soc.}, {\bf 77}, 1--30 (1954).

\bibitem{GilbTrud}
D. Gilbarg, N. S. Trudinger, {\it Elliptic Partial Differential
Equations of Second Order}, Springer-Verlag,
Berlin--Heidelberg--New York--Tokyo, 1983.

\bibitem{GalSkMs}
E. I. Galakhov, A. L. Skubachevskii, ``On contractive nonnegative
semigroups with nonlocal conditions,'' {\it Mat. Sb.}, {\bf 189},
45--78 (1998); English transl.: {\it Math. Sb.} {\bf 189} (1998).

\bibitem{GalSkJDE}
E. I. Galakhov, A. L. Skubachevskii, ``On Feller semigroups
generated by
     elliptic operators with integro-differential boundary conditions,'' {\it J.
     of Differential Equations,} {\bf 176}, 315--355 (2001).



\bibitem{GurMIAN2007}
P. L. Gurevich, ``Elliptic equations with nonlocal conditions near the conjugation points
in the spaces of continuous functions,'' {\it Tr. Mat. Inst. Steklova}; English transl.:
{\it Proc. Steklov Inst. Math.}





\bibitem{Ishikawa}
Y. Ishikawa, ``A remark on the existence of a diffusion process
with non-local boundary conditions,'' {\it J. Math. Soc. Japan},
{\bf 42}, 171--184 (1990).



\bibitem{SU}
K. Sato,  T.~Ueno, ``Multi-dimensional diffusion and the Markov
process on the boundary,'' {\it J. Math. Kyoto Univ.} {\bf 4},
529--605 (1965).


\bibitem{SkDAN89}
A. L. Skubachevskii, ``On some problems for multidimensional
diffusion processes,'' {\it Dokl. Akad. Nauk SSSR}, {\bf 307},
287--292 (1989); English transl. in {\it Soviet Math. Dokl.} {\bf
40} (1990).

\bibitem{SkRJMP95}
A. L. Skubachevskii, ``Nonlocal elliptic problems and
multidimensional diffusion processes,'' {\it Russian J. of
Mathematical Physics}, {\bf 3}, 327--360 (1995).






\bibitem{Taira1}
K. Taira, {\it Diffusion Processes and Partial Differential
Equations}, Academic Press, New York--London, 1988.



\bibitem{Taira3}
K. Taira  {\it Semigroups, Boundary Value Problems and Markov Processes}.
Springer-Verlag, Berlin  2004.


\bibitem{Ventsel}
A. D. Ventsel, ``On boundary conditions for multidimensional
diffusion processes,'' {\it Teor. Veroyatnost. i Primen., \bf 4},
172--185 (1959); English transl.: {\it Theory Probab. Appl.}, {\bf
4} (1959).


\end{thebibliography}
\end{document}